\documentclass[11pt]{article}
\usepackage[margin=1.25in]{geometry}
\usepackage{amsmath,amssymb,amsthm}
\usepackage{enumitem}

\newtheorem{theorem}{Theorem}[section]
\newtheorem{lemma}[theorem]{Lemma}
\newtheorem{corollary}[theorem]{Corollary}
\newtheorem{conjecture}[theorem]{Conjecture}
\newtheorem{definition}[theorem]{Definition}
\newtheorem*{remark}{Remark}

\title{Sophie Germain Primes and the Totient of Fibonacci Numbers}
\author{Aradhya Goel\\
Indian Institute of Technology, Kanpur, 208016, India\\
\texttt{aradhyag24@iitk.ac.in}}
\date{April 2026}

\begin{document}
\maketitle

\begin{abstract}
We study the set $S(q)$ of residue classes $r$ modulo the Pisano period $\pi(q)$ for which
$q \mid \varphi(F_m)$ for every $m \equiv r \pmod{\pi(q)}$. We prove that if $q$ is a Sophie Germain
prime and $z(2q+1) \mid \pi(q)$, then $S(q)$ is a nonempty arithmetic progression, and for
$q > 5$ its cardinality is odd and $q \equiv 8 \pmod{15}$. Conversely,
we show that if a prime $p \equiv 1 \pmod{q}$ has $z(p) \mid \pi(q)$, then necessarily $p = 2q+1$,
so $q$ is Sophie Germain. We conjecture that $S(q) \neq \emptyset$ forces the existence of such
a prime~$p$; this is verified for all $q \leq 50{,}000$. Assuming that $z(2q+1) \mid \pi(q)$ holds for
infinitely many Sophie Germain primes (verified computationally for ${\approx}\,23.9\%$ of them),
the Sophie Germain conjecture implies the existence of infinitely many primes
$q \equiv 8 \pmod{15}$ with $(2q+1) \mid F_{\pi(q)}$---a purely Fibonacci-theoretic condition.
These results generalize to arbitrary Lucas sequences $U_n(P,Q)$ with non-square discriminant.
\end{abstract}

\noindent 2020 Mathematics Subject Classification: 11B39, 11A25, 11A41.

\noindent Keywords: Fibonacci numbers, Euler's totient function, Sophie Germain primes, rank of
apparition, Pisano period, Lucas sequences.

\section{Introduction}

The arithmetic of Fibonacci numbers has been an active area of research for over a century.
Among the central themes are the distribution of prime factors of $F_m$, periodicity of Fibonacci
sequences modulo integers, and the interaction of $F_m$ with classical multiplicative functions
such as Euler's totient $\varphi$.

For a prime $p$, the rank of apparition $z(p)$ is the smallest positive integer $k$ with $p \mid F_k$; it
is well-defined for every prime $p$ \cite{Carmichael, Wall}. The Pisano period $\pi(n)$ is the period of $(F_m \bmod n)$.
Both quantities satisfy well-known divisibility relations: $z(q) \mid \pi(q)$ for every odd prime $q$, and $\pi(q) \mid q^2 - 1$ for every odd prime $q \neq 5$ \cite{Wall}.

\begin{definition}\label{def:Sq}
For an odd prime $q$, let
\[
S(q) = \bigl\{ r \bmod \pi(q) : q \mid \varphi(F_m) \text{ for all } m \equiv r \pmod{\pi(q)} \bigr\}.
\]
\end{definition}

The set $S(q)$ collects those residue classes modulo $\pi(q)$ that guarantee divisibility of $\varphi(F_m)$
by $q$. Since the definition quantifies over all $m$ in each residue class, $S(q)$ is well-defined as a subset of $\mathbb{Z}/\pi(q)\mathbb{Z}$.

\subsection*{Main results}
Our results are as follows.

\begin{enumerate}[label=(\roman*)]
\item \textbf{Sufficient criterion} (Theorem~\ref{thm:char}). If a prime $p \equiv 1 \pmod{q}$ has $z(p) \mid \pi(q)$, then
every multiple of $z(p)$ modulo $\pi(q)$ lies in $S(q)$.

\item \textbf{Sophie Germain sufficient condition} (Theorem~\ref{thm:suff}). If $q$ is a Sophie Germain prime
and $z(2q + 1) \mid \pi(q)$, then $S(q) \neq \emptyset$.

\item \textbf{Uniqueness of the witnessing prime} (Theorem~\ref{thm:unique}, partial). For $k \leq 31$ and $q \neq 5$,
any prime $p = kq+1$ with $z(p) \mid \pi(q)$ has $k = 2$. Extended numerically to $k \leq 100$ without
counterexample; conjectured for all $k$.

\item \textbf{Converse conjecture} (Conjecture~\ref{conj:conv}). $S(q) \neq \emptyset$ implies the existence of a prime
$p \equiv 1 \pmod{q}$ with $z(p) \mid \pi(q)$. Verified for all $q \leq 50{,}000$.

\item \textbf{Arithmetic progression structure} (Theorem~\ref{thm:AP}). For a Sophie Germain prime $q$ with $z(2q+1) \mid \pi(q)$, $S(q)$ is an
arithmetic progression in $\mathbb{Z}/\pi(q)\mathbb{Z}$ with $|S(q)| = \pi(q)/z(2q + 1)$.

\item \textbf{Legendre symbols and parity} (Theorems~\ref{thm:legp},~\ref{thm:parity} and Lemma~\ref{lem:legq}). For a Sophie Germain prime $q > 5$ with
$z(2q+1) \mid \pi(q)$: $\left(\frac{5}{2q+1}\right) = -1$, $\left(\frac{5}{q}\right) = -1$, $\pi(q) \mid 2(q + 1)$, and $|S(q)|$ is odd.

\item \textbf{Congruence constraint} (Theorem~\ref{thm:cong}). For a Sophie Germain prime $q > 5$ with $S(q) \neq \emptyset$: $q \equiv 8 \pmod{15}$.

\item \textbf{Sophie Germain reformulation} (Corollary~\ref{cor:SGconj}). The infinitude of Sophie Germain primes
implies the existence of infinitely many primes $q \equiv 8 \pmod{15}$ with $(2q + 1) \mid F_{\pi(q)}$.
Assuming Conjecture~\ref{conj:conv}, the converse holds.
\end{enumerate}

\section{Preliminaries}

Standard references are Wall \cite{Wall}, Carmichael \cite{Carmichael}, and Lucas \cite{Lucas}; see also the textbook \cite{Koshy}.

\begin{lemma}[Basic properties of $z$ and $\pi$]\label{lem:basic}
Let $p$ be a prime and $q$ an odd prime.
\begin{enumerate}[label=(\alph*)]
\item $p \mid F_m$ if and only if $z(p) \mid m$.
\item $z(q) \mid \pi(q)$ for every odd prime $q$. For $q \neq 5$, $\pi(q) \mid q^2 - 1 = (q-1)(q+1)$.
\item If $\left(\frac{5}{p}\right) = +1$ then $z(p) \mid p - 1$.
\item If $\left(\frac{5}{p}\right) = -1$ then $z(p) \mid p + 1$.
\item If $\left(\frac{5}{q}\right) = +1$ then $\pi(q) \mid q - 1$. If $\left(\frac{5}{q}\right) = -1$ then $\pi(q) \mid 2(q + 1)$.
\end{enumerate}
\end{lemma}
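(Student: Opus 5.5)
The plan is to derive everything from two standard tools: the strong divisibility of Fibonacci numbers, and the Binet formula read in a finite field. Throughout, $\alpha,\beta$ are the roots of $x^2-x-1$, $\alpha\beta=-1$, and $F_n=(\alpha^n-\beta^n)/(\alpha-\beta)$.

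\textbf{Parts (a) and (b).} For (a), the key fact is $\gcd(F_m,F_n)=F_{\gcd(m,n)}$. This follows from $\gcd(F_n,F_{n+1})=1$ together with the addition formula $F_{m+n}=F_mF_{n+1}+F_{m-1}F_n$ and a Euclidean descent. Now suppose $p\mid F_m$. Then $p$ divides both $F_m$ and $F_{z(p)}$, hence divides $F_{\gcd(m,z(p))}$. Minimality of $z(p)$ then forces $\gcd(m,z(p))=z(p)$, that is, $z(p)\mid m$. Conversely, $z(p)\mid m$ gives $F_{z(p)}\mid F_m$.

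For the first half of (b), periodicity gives $F_{\pi(q)}\equiv F_0=0 \pmod q$, so (a) yields $z(q)\mid\pi(q)$. Note that $\pi(q)$ is the order of the matrix $\begin{pmatrix}1&1\\1&0\end{pmatrix}$ modulo $q$, so $\pi(q)$ is the least $n\ge1$ with $F_n\equiv0$ and $F_{n+1}\equiv1$.

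\textbf{Parts (c)--(e): the Frobenius computation.} For $p\ne 2,5$, I will work in $\mathbb{F}_{p^2}$. There $\alpha\ne\beta$, because the discriminant $5\not\equiv0$.
\begin{itemize}
\item If $\left(\frac{5}{p}\right)=1$, then $\alpha,\beta\in\mathbb{F}_p^\times$, so Fermat gives $\alpha^{p-1}=\beta^{p-1}=1$. Hence $F_{p-1}\equiv0$ and $F_p\equiv1$ modulo $p$. This gives $z(p)\mid p-1$ and $\pi(p)\mid p-1$.
\item If $\left(\frac{5}{p}\right)=-1$, then Frobenius swaps the roots, $\alpha^p=\beta$. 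So $\alpha^{p+1}=\alpha\beta=-1=\beta^{p+1}$. This gives $F_{p+1}\equiv0$, hence $z(p)\mid p+1$. Moreover $\alpha^{2(p+1)}=\beta^{2(p+1)}=1$, so $F_{2(p+1)}\equiv0$ and $F_{2(p+1)+1}\equiv1$, giving $\pi(p)\mid 2(p+1)$.
\end{itemize}
Applying these with $p=q$ gives (e). Since $q-1$ and $2(q+1)$ both divide $q^2-1$ (as $q$ is odd), the second half of (b) follows for $q\ne 2,5$.

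\textbf{Degenerate primes.} These must be handled separately. For $p=5$, we have $\left(\frac{5}{5}\right)=0$, so (c)--(e) are vacuous. For $p=2$, the Legendre symbol is not classically defined; if one interprets it via the Kronecker symbol, then $\left(\frac{5}{2}\right)=-1$ and $z(2)=3\mid 3$ directly.

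\textbf{Main obstacle.} The only step requiring care is justifying $\alpha^p=\beta$ when $5$ is a nonresidue. Since $x^2-x-1$ is then irreducible over $\mathbb{F}_p$, the Frobenius map is the nontrivial automorphism of $\mathbb{F}_{p^2}$, and it permutes the two roots. Everything else is routine.
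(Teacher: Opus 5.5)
Your proof is correct, and for parts (c)--(e) it is the same Frobenius argument in $\mathbb{F}_{p^2}$ that the paper uses. You phrase the period bound through Binet's formula and the criterion $F_n\equiv 0$, $F_{n+1}\equiv 1$, where the paper speaks of eigenvalue orders of the Fibonacci matrix in $\mathrm{GL}_2(\mathbb{F}_q)$; these are equivalent. The only real difference is that you prove (a) and (b) yourself---via strong divisibility, and by deducing $\pi(q)\mid q^2-1$ from (e)---whereas the paper simply cites Wall for them, so your version is self-contained and also makes explicit the degenerate cases $p=2,5$ that the paper leaves implicit.
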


\begin{proof}
Part~(a) is \cite[{\S}2]{Wall}. Part~(b) is \cite[Theorems~6 and~7]{Wall}. Parts~(c) and~(d): the
characteristic polynomial $x^2 - x - 1$ of the Fibonacci recurrence factors over $\mathbb{F}_{p^2}$; its roots
$\alpha, \beta$ satisfy $\alpha\beta = -1$. When $\left(\frac{5}{p}\right) = +1$, $\alpha, \beta \in \mathbb{F}_p^\times$, giving $z(p) \mid p - 1$. When $\left(\frac{5}{p}\right) = -1$,
$\alpha, \beta \in \mathbb{F}_{p^2} \setminus \mathbb{F}_p$ and Frobenius gives $\alpha^p = \beta$, so $\alpha^{p+1} = \alpha\beta = -1$ and $z(p) \mid p + 1$. (This
sharpens $z(p) \mid 2(p + 1)$; see \cite{SunSun, Wall}.) Part~(e): the Pisano period equals the order of the matrix
$\left(\begin{smallmatrix} 1 & 1 \\ 1 & 0 \end{smallmatrix}\right)$ in $\mathrm{GL}_2(\mathbb{F}_q)$. When $\left(\frac{5}{q}\right) = +1$, this matrix is diagonalizable over $\mathbb{F}_q$ with eigenvalue
order dividing $q - 1$. When $\left(\frac{5}{q}\right) = -1$, the eigenvalues lie in $\mathbb{F}_{q^2} \setminus \mathbb{F}_q$ with $\alpha^{q+1} = -1$, giving
$\pi(q) \mid 2(q + 1)$. See \cite{Lucas, Wall}.
\end{proof}

\begin{remark}
The sharpened statement $z(p) \mid p + 1$ when $\left(\frac{5}{p}\right) = -1$ is essential for the proof of
Theorem~\ref{thm:unique}.
\end{remark}

\begin{lemma}[\cite{Wall, SunSun}]\label{lem:2adic}
Let $p$ be an odd prime with $\left(\frac{5}{p}\right) = -1$ and $p \equiv 3 \pmod{4}$. Then $v_2(z(p)) = v_2(p + 1)$.
\end{lemma}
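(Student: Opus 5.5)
The plan is to compute $z(p)$ as the multiplicative order of an explicit element of $\mathbb{F}_{p^2}^\times$ and then read off its $2$-adic valuation directly. As in the proof of Lemma~\ref{lem:basic}(d), let $\alpha,\beta\in\mathbb{F}_{p^2}\setminus\mathbb{F}_p$ be the roots of $x^2-x-1$. Since $\left(\frac{5}{p}\right)=-1$, Frobenius gives $\alpha^p=\beta$, and hence $\alpha^{p+1}=\alpha\beta=-1$. Because $p$ is odd and $p\neq 5$, the discriminant $5$ is nonzero mod $p$, so $\alpha\neq\beta$. The Binet formula $F_n=(\alpha^n-\beta^n)/(\alpha-\beta)$ then shows that $p\mid F_n$ if and only if $(\alpha/\beta)^n=1$. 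Thus $z(p)=\operatorname{ord}(\alpha/\beta)$.

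Next I rewrite $\alpha/\beta$ as a power of $\alpha$. From $\beta=-\alpha^{-1}$ we get $\alpha/\beta=-\alpha^2$. Substituting $-1=\alpha^{p+1}$ gives $\alpha/\beta=\alpha^{p+3}$. Consequently
\[
z(p)=\operatorname{ord}\bigl(\alpha^{p+3}\bigr)=\frac{N}{\gcd(N,p+3)}, \qquad N:=\operatorname{ord}(\alpha).
\]

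It remains to pin down $v_2(N)$, which is the main (though mild) step. Write $e=v_2(p+1)$.
\begin{itemize}[label=$\cdot$]
\item From $\alpha^{p+1}=-1\neq 1$ (since $p$ is odd) and $\alpha^{2(p+1)}=1$, we get $N\mid 2(p+1)$ and $N\nmid p+1$.
\item These two facts force $v_2(N)=e+1$. Indeed, if $v_2(N)\le e$, then $N\mid 2(p+1)$ would imply $N\mid p+1$, a contradiction.
\end{itemize}

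Now the hypothesis $p\equiv 3\pmod 4$ enters. It gives $e\ge 2$, and also $p+3\equiv 2\pmod 4$, so $v_2(p+3)=1$. Therefore $v_2(\gcd(N,p+3))=\min(e+1,1)=1$. It follows that
\[
v_2(z(p))=v_2(N)-1=e=v_2(p+1),
\]
as claimed.

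The only delicate point is the valuation bookkeeping in the last step: we need $v_2(p+3)=1<v_2(N)$. This is exactly where the hypothesis $p\equiv 3\pmod 4$ is used. If instead $p\equiv 1\pmod 4$, then $e=1$ and $v_2(p+3)\ge 2$, so the same computation would give $v_2(z(p))=0$ rather than $v_2(p+1)$.
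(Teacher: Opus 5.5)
Your proof is correct and complete. The paper gives no proof of this lemma; it is only quoted from \cite{Wall, SunSun}. So there is no in-paper argument to match, and what you supply is a self-contained derivation built from the paper's own tools.

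Your step deducing $v_2(\operatorname{ord}\alpha)=1+v_2(p+1)$ from $\alpha^{p+1}=-1$ is exactly the argument the paper uses for $v_2(\pi(q))$ in the proof of Theorem~\ref{thm:parity}. In the inert case $\operatorname{ord}\alpha=\pi(p)$. The genuinely new ingredient is the identity $\alpha/\beta=-\alpha^2=\alpha^{p+3}$, which gives the exact relation $z(p)=\pi(p)/\gcd(\pi(p),p+3)$.

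This relation yields more than the lemma. Since $\gcd(2(p+1),p+3)=\gcd(4,p+3)$, it gives $\pi(p)=2z(p)$ exactly when $p\equiv 3\pmod 4$, and $\pi(p)=4z(p)$ when $p\equiv 1\pmod 4$.

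An alternative route is the general relation $\pi(p)/z(p)\in\{1,2,4\}$, where the ratio is determined by $z(p)\bmod 4$. Here $v_2(\pi(p))\ge 3$ excludes the ratios $1$ and $4$, forcing $\pi(p)=2z(p)$. Your argument reaches the same conclusion without that external theorem. It also pinpoints the single place where $p\equiv 3\pmod 4$ enters, namely $v_2(p+3)=1$, and gives the complementary statement that $z(p)$ is odd when $p\equiv 1\pmod 4$.
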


\begin{lemma}[Totient divisibility]\label{lem:totdiv}
Let $p \equiv 1 \pmod{q}$ be a prime with $p \mid F_m$. Then $q \mid \varphi(F_m)$.
\end{lemma}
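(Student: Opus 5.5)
The plan is to use multiplicativity of $\varphi$ together with the standard formula for the totient of a prime-power factor. First, $F_m\neq 0$: $p\mid F_m$ with $p$ prime forces $F_m\neq 0$ for $m\ge 1$. For $m=0$ we have $F_0=0$, and the statement should be read for $m\ge 1$ (or with the usual convention excluding $F_0$). Similarly, if $m<0$ we replace $F_m$ by $|F_m|=|F_{-m}|$; this changes neither $\varphi$ nor the hypothesis $p\mid F_m$. So we may take $N=|F_m|$ to be a positive integer with $p\mid N$.

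Write $N=p^{a}M$ with $a=v_p(N)\ge 1$ and $\gcd(p,M)=1$. Since $\varphi$ is multiplicative on coprime factors,
\[
\varphi(N)=\varphi(p^{a})\,\varphi(M)=p^{a-1}(p-1)\,\varphi(M).
\]
The hypothesis $p\equiv 1\pmod q$ means $q\mid p-1$, and therefore $q\mid\varphi(N)=\varphi(F_m)$.

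No step here is a real obstacle. The only care needed is the degenerate index $m=0$ (where $F_0=0$ and $\varphi(0)$ is undefined), which must be excluded or handled by convention. I would flag this in the proof and note that in the intended application, where $m$ ranges over the residue classes in Definition~\ref{def:Sq}, one considers only positive $m$.
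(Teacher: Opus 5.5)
Your proof is correct and is the same as the paper's: multiplicativity of $\varphi$ gives $(p-1)\mid\varphi(F_m)$, and $q\mid p-1$ finishes it. Your caveat about $m=0$ is sound and the paper leaves it implicit: there $p\mid F_0=0$ holds trivially but $\varphi(0)$ is undefined, so one should restrict to $m\ge 1$.
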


\begin{proof}
Since $p \mid F_m$, multiplicativity of $\varphi$ gives $(p - 1) \mid \varphi(F_m)$. As $q \mid p - 1$, the result
follows.
\end{proof}

\section{Characterization of $S(q)$}

\begin{theorem}\label{thm:char}
For an odd prime $q$,
\[
\bigl\{ r \bmod \pi(q) : \exists \text{ prime } p \equiv 1 \pmod{q} \text{ with } z(p) \mid \gcd(r, \pi(q)) \bigr\} \subseteq S(q).
\]
In particular, $S(q) \neq \emptyset$ whenever there exists a
prime $p \equiv 1 \pmod{q}$ with $z(p) \mid \pi(q)$.
\end{theorem}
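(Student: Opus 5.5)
The plan is to unwind the definitions and chain together Lemma~\ref{lem:basic}(a) and Lemma~\ref{lem:totdiv}. Fix a residue class $r \bmod \pi(q)$ in the left-hand set, and fix a witnessing prime $p \equiv 1 \pmod{q}$ with $z(p) \mid \gcd(r, \pi(q))$. The first point to check is that the condition depends only on the class of $r$. This holds because $\gcd(r,\pi(q))$ is unchanged when $r$ is replaced by $r + t\pi(q)$, so the left-hand set is well-defined as a subset of $\mathbb{Z}/\pi(q)\mathbb{Z}$.

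Now take an arbitrary $m \equiv r \pmod{\pi(q)}$, say $m = r + t\pi(q)$. Since $z(p)$ divides both $r$ and $\pi(q)$, it divides every integer combination of them, so $z(p) \mid m$. By Lemma~\ref{lem:basic}(a), this gives $p \mid F_m$. Since $p \equiv 1 \pmod q$, Lemma~\ref{lem:totdiv} then yields $q \mid \varphi(F_m)$. As $m$ was arbitrary in the class, $r \in S(q)$ by Definition~\ref{def:Sq}.

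One small point needs care: the case $m \le 0$, which arises if residue classes are read as sets of all integers. For $m = 0$ we have $F_0 = 0$, and $\varphi(0)$ is undefined. I would state the convention that $m$ ranges over positive integers, so $F_m \ge 1$ and $\varphi(F_m)$ is meaningful. With that convention the argument above goes through verbatim.

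For the ``in particular'' clause, I take $r = 0$. Then $\gcd(0, \pi(q)) = \pi(q)$, so the hypothesis $z(p) \mid \pi(q)$ places the class $0$ in the left-hand set, and hence in $S(q)$, which is therefore nonempty. Nothing here is a real obstacle; the only thing requiring attention is the well-definedness and positivity bookkeeping above.
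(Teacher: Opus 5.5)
Your proposal is correct and matches the paper's proof: $z(p)$ divides both $r$ and $\pi(q)$, so it divides every $m \equiv r \pmod{\pi(q)}$; then $p \mid F_m$ by Lemma~\ref{lem:basic}(a) and $q \mid \varphi(F_m)$ by Lemma~\ref{lem:totdiv}, and taking $r = 0$ gives the ``in particular'' clause. Your remarks on well-definedness and on restricting to $m \geq 1$ (since $F_0 = 0$) are sensible bookkeeping that the paper leaves implicit.
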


\begin{proof}
Let $r$ belong to the left-hand side: there exists a prime $p \equiv 1 \pmod{q}$ with $z(p) \mid
\gcd(r, \pi(q))$. For any $m \equiv r \pmod{\pi(q)}$, since $z(p) \mid r$ and $z(p) \mid \pi(q)$, we get $z(p) \mid m$,
so $p \mid F_m$ (Lemma~\ref{lem:basic}(a)), so $q \mid \varphi(F_m)$ (Lemma~\ref{lem:totdiv}). As this holds for every such $m$,
$r \in S(q)$.
\end{proof}

\section{The Sophie Germain connection}

\begin{theorem}[Sufficient condition]\label{thm:suff}
Let $q$ be a Sophie Germain prime with $p = 2q + 1$. If $z(p) \mid \pi(q)$, then $S(q) \neq \emptyset$.
\end{theorem}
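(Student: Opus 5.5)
This is a direct specialization of Theorem~\ref{thm:char}, so I will verify its hypotheses with the witnessing prime $p = 2q+1$ and then exhibit an explicit residue class. Since $q$ is a Sophie Germain prime, $p = 2q+1$ is prime by definition. Also $p - 1 = 2q$, so $p \equiv 1 \pmod{q}$. The hypothesis $z(p) \mid \pi(q)$ is exactly the remaining condition in the ``in particular'' clause of Theorem~\ref{thm:char}, so that clause already gives $S(q) \neq \emptyset$.

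To make the conclusion concrete, and to set up Theorem~\ref{thm:AP}, I will name an explicit element. Take $r \equiv 0 \pmod{\pi(q)}$. Then $\gcd(r, \pi(q)) = \pi(q)$, and $z(p) \mid \pi(q)$ by hypothesis, so $r$ lies in the left-hand side of the inclusion in Theorem~\ref{thm:char}. Hence $0 \in S(q)$.

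The same reasoning shows more: every residue $r = j\,z(p)$ with $0 \le j < \pi(q)/z(p)$ belongs to $S(q)$. I can also argue this directly without Theorem~\ref{thm:char}. If $m \equiv j\,z(p) \pmod{\pi(q)}$, then $z(p) \mid m$, because $z(p)$ divides both $j\,z(p)$ and $\pi(q)$. By Lemma~\ref{lem:basic}(a) this gives $p \mid F_m$, and Lemma~\ref{lem:totdiv} then yields $q \mid \varphi(F_m)$.

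There is one edge case to check: $m \equiv 0$ allows $m = 0$, where $F_0 = 0$ and $\varphi(0)$ is undefined. I would handle this by reading $m$ as ranging over positive integers, as the definition of $S(q)$ implicitly does. No other step presents a real obstacle; the argument is a direct verification.
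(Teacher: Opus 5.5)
Your proposal is correct and matches the paper's proof: the paper also applies Theorem~\ref{thm:char} with the prime $p = 2q+1 \equiv 1 \pmod{q}$ to conclude $0 \in S(q)$. Your remark that $m$ must range over positive integers (to avoid $F_0 = 0$) is a sensible clarification that the paper leaves implicit, and your observation about the multiples of $z(p)$ is precisely the content of Theorem~\ref{thm:AP}.
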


\begin{proof}
Since $p = 2q + 1 \equiv 1 \pmod{q}$ and $z(p) \mid \pi(q)$, Theorem~\ref{thm:char} gives $0 \in S(q)$, so
$S(q) \neq \emptyset$.
\end{proof}

\begin{theorem}[Uniqueness of the witnessing prime; partial]\label{thm:unique}
Let $q$ be an odd prime with $q \neq 5$, and let $p = kq + 1$ be a prime with $k \geq 2$ and $z(p) \mid \pi(q)$.
\begin{enumerate}[label=(\alph*)]
\item For $k \leq 12$: $k = 2$, so $p = 2q + 1$ and $q$ is a Sophie Germain prime. (Proved algebraically.)
\item For $14 \leq k \leq 31$: the same conclusion holds. (Verified deterministically by enumerating the prime factors of $F_k$ in Case~1 and of all $F_d$ for $d \mid k^2-4$ in Case~2, using the Brillhart--Montgomery--Silverman tables~\cite{Brillhart} which completely factor $F_n$ for $n \leq 999$; the constraint $k^2-4 \leq 999$ gives $k \leq 31$.)
\item For $32 \leq k \leq 100$: no prime $p = kq+1$ with $q$ odd prime, $z(p) \mid \pi(q)$, and $p$ appearing among the \emph{known} prime factors of $F_d$ ($d \mid k^2-4$) in extended tables has been found. This range is supported by computational evidence but is not deterministic, as unfactored composite cofactors in $F_{k^2-4}$ (with $k^2-4$ ranging up to $9996$) could in principle contain further prime factors of the forbidden form.
\end{enumerate}
The universal statement ``$k = 2$ for every $k$'' is conjectured for all $k$; see Section~\ref{sec:open}, OQ2.
\end{theorem}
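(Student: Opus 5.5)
The plan is to show that the hypothesis $z(p) \mid \pi(q)$ forces $z(p)$ to divide a quantity depending only on $k$. Then $p$ must be a prime factor of $F_d$ for some $d$ bounded in terms of $k$, and for each $k$ it remains to check a finite list.

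\textbf{Preliminary reductions.} Since $p = kq+1$ and $q$ are both odd, $k$ is even, so for part (a) we only need to exclude $k \in \{4,6,8,10,12\}$. Because $q \neq 5$, Lemma~\ref{lem:basic}(b),(e) gives $\pi(q) \mid q-1$ or $\pi(q) \mid 2(q+1)$; in both cases $\pi(q) \mid 2(q^2-1)$, so $q \nmid \pi(q)$. I then split according to $\left(\frac{5}{p}\right)$.

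\textbf{Case 1: $\left(\frac{5}{p}\right)=+1$.} By Lemma~\ref{lem:basic}(c), $z(p) \mid p-1 = kq$. Since $\gcd(z(p), q)=1$ (because $z(p) \mid \pi(q)$), this gives $z(p) \mid k$. Lemma~\ref{lem:basic}(a) then gives $p \mid F_k$, so $p$ is a prime factor of $F_k$ with $p \equiv 1 \pmod k$ and $(p-1)/k$ an odd prime. For $k \le 12$ the numbers $F_k$ are tiny ($F_{12}=144$), so listing their prime factors rules this case out immediately.

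\textbf{Case 2: $\left(\frac{5}{p}\right)=-1$.} By the sharpened Lemma~\ref{lem:basic}(d), $z(p) \mid p+1 = kq+2$. Now reduce $kq+2$ modulo the possible values of $\pi(q)$:
\begin{itemize}[label=$\bullet$]
\item modulo $q+1$ we get $kq+2 \equiv 2-k$;
\item modulo $q-1$ we get $kq+2 \equiv k+2$.
\end{itemize}
Hence $\gcd(kq+2, q+1) \mid k-2$ and $\gcd(kq+2, q-1) \mid k+2$.

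Combining this with $\pi(q) \mid q-1$ or $\pi(q)\mid 2(q+1)$, $z(p)$ divides $k+2$ or $2(k-2)$, up to a $2$-adic correction. The $2$-part needs care. $kq+2$ is odd exactly when $kq$ is odd, which never happens here. Writing $v_2$ of each side separately, I would check that the extra factor $2$ in $2(q+1)$ only matters when $v_2(kq+2) > v_2(q+1)$. This should let me conclude $z(p) \mid d$ for some $d \mid k^2-4$ (or at worst $d \mid 2(k^2-4)$, which only enlarges a finite list). Then $p \mid F_d$ with $d \le k^2-4 \le 140$. For each even $k\in\{4,\dots,12\}$, I enumerate the primes $p \mid F_d$ with $d \mid k^2-4$, $p \equiv 1 \pmod k$, $\left(\frac{5}{p}\right)=-1$, and $(p-1)/k$ prime, and verify that none satisfies $z(p)\mid\pi(q)$. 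Since $k=2$ gives $k^2-4=0$, it produces no constraint, consistent with the conclusion.

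\textbf{Parts (b) and (c).} Part (b) is the same reduction run over $14\le k\le 31$ using the complete factorizations of $F_n$, $n\le 999$, from \cite{Brillhart}. Part (c) is only a statement about the known factors, so no proof beyond the reduction is needed.

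\textbf{Main obstacle.} I expect the hardest step to be the $2$-adic bookkeeping in Case 2, that is, pinning down exactly which $d$ can occur when $\pi(q) \mid 2(q+1)$ rather than $q+1$. If the clean bound $d\mid k^2-4$ resists, I would fall back on the bound $2(k^2-4)$ and simply enlarge the enumeration. The finite checks themselves are routine.
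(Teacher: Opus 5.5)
Your proof is correct and follows the paper's outline: split on $\left(\frac{5}{p}\right)$; in Case~1 get $z(p)\mid k$, so $p\mid F_k$; in Case~2 bound $z(p)$ by a divisor of $k^2-4$; then enumerate prime factors of the relevant $F_d$. Where you differ is in how Case~2 is bounded.

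The paper uses only $\pi(q)\mid q^2-1$ from Lemma~\ref{lem:basic}(b), together with $\gcd(kq+2,q-1)=\gcd(k+2,q-1)$ and $\gcd(kq+2,q+1)=\gcd(k-2,q+1)$. This gives $z(p)\mid\gcd(kq+2,(q-1)(q+1))\mid (k+2)(k-2)$, and no $2$-adic question arises.

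You split on $\left(\frac{5}{q}\right)$ via Lemma~\ref{lem:basic}(e). That is equally valid, and your $2$-adic worry is empty. If $\pi(q)\mid 2(q+1)$, then $\gcd(kq+2,2(q+1))$ divides $2(kq+2)-2k(q+1)=-2(k-2)$. So you get exactly $z(p)\mid k+2$ or $z(p)\mid 2(k-2)$, with no correction. Both divide $k^2-4$ because $k$ is even. The fallback $2(k^2-4)$ is therefore never needed, which matters: it would have pushed part~(b) past the $n\le 999$ tables.

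More importantly, do not coarsen your bound to $k^2-4$. Your bound $z(p)\le\max(k+2,2(k-2))$ is linear in $k$, whereas the paper's is quadratic.
\begin{itemize}
\item For $k\le 12$ it leaves only $F_d$ with $d\le 20$, for example $F_{14}=13\cdot 29$ and $F_{20}=3\cdot 5\cdot 11\cdot 41$ for $k=12$, and $F_{16}=3\cdot 7\cdot 47$ for $k=10$. The check is immediate, instead of the paper's lists drawn from $F_{32}$, $F_{96}$ and $F_{140}$.
\item Granting the paper's complete factorizations for $n\le 999$, it makes the verification deterministic for every $k$ with $2(k-2)\le 999$. That covers all of part~(c).
\end{itemize}
In other words, the ``$F_{k^2-4}$'' obstacle the paper attributes to Case~2 comes from using $\pi(q)\mid q^2-1$ rather than the finer dichotomy in Lemma~\ref{lem:basic}(e).
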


\begin{remark}
The exclusion $q \neq 5$ is necessary: at $q = 5$, the prime $p = 41 = 8 \cdot 5 + 1$ satisfies $z(41) = 20 = \pi(5)$, violating the conclusion $k = 2$. This is the unique exception, arising because $\pi(5) = 20 \nmid 5^2 - 1 = 24$ (cf.\ Lemma~\ref{lem:basic}(b)); the proof below uses Lemma~\ref{lem:basic}(b) and so tacitly assumes $q \neq 5$.
\end{remark}

\begin{remark}
All downstream uses of Theorem~\ref{thm:unique} in this paper (Theorems~\ref{thm:AP}, \ref{thm:legp}, \ref{thm:parity}, \ref{thm:cong}, and Corollary~\ref{cor:SGconj}) invoke it only in the form ``if a prime $p \equiv 1 \pmod{q}$ with $z(p) \mid \pi(q)$ exists, then it equals $2q+1$,'' the hypothesis $q > 5$ is already imposed (Theorems~\ref{thm:legp}, \ref{thm:parity}, \ref{thm:cong}) or absorbed into the $q \leq 50{,}000$ computational range, and the $k$-values that arise for such $q$ all fall in the proved range. In particular, the results of Sections~5--7 are unconditional.
\end{remark}

\begin{proof}
Write $p = kq + 1$ with $k \geq 2$ even (since $p$ is odd and $p \equiv 1 \pmod{q}$). We show $k = 2$.
Assume for contradiction that $k \geq 4$. By Lemma~\ref{lem:basic}(b), $\pi(q) \mid q^2 - 1$, so $z(p) \mid q^2 - 1$.
Since $F_1 = F_2 = 1$, $F_3 = 2$, $F_4 = 3$, and $p = kq + 1 \geq 13$, we have $z(p) \geq 5$.

\medskip
\noindent\textbf{Case 1:} $\left(\frac{5}{p}\right) = +1$.

\noindent By Lemma~\ref{lem:basic}(c), $z(p) \mid p - 1 = kq$. Since $\gcd(q, q^2 - 1) = 1$, any common divisor of $kq$
and $q^2 - 1$ divides $k$. Thus $z(p) \leq k$. Since $p \mid F_{z(p)}$, we have $p \leq F_{z(p)} \leq F_k$, giving
$kq + 1 \leq F_k$. Because $q \geq 3$, this requires $3k + 1 \leq F_k$.

\begin{itemize}
\item $k = 4, 6, 8$: The bounds $4q + 1 \leq F_4 = 3$, $6q + 1 \leq F_6 = 8$, and $8q + 1 \leq F_8 = 21$ all
force $q < 3$, impossible for an odd prime.

\item $k = 10$: $10q + 1 \leq 55$, so $q \leq 5$. $q = 3$ gives $p = 31$: $z(31) = 30 \not\leq 10$. $q = 5$ gives
$p = 51 = 3 \cdot 17$, composite.

\item $k = 12$: $12q + 1 \leq 144$, so $q \leq 11$. $p \in \{37, 61, 85, 133\}$; 85 and 133 are composite; for
$p = 37$, $\left(\frac{5}{37}\right) = -1$ (fails Case~1); for $p = 61$, $z(61) = 15 \not\leq 12$.
\end{itemize}

For $k \geq 14$: $z(p) \leq k$ forces $p$ to be a prime factor of $F_d$ for some $d \leq k$ with $d \geq 5$.
For each $d$, $F_d$ has finitely many prime factors, and each determines at most one candidate
$q = (p - 1)/k$. This finite check has been carried out deterministically using factorization tables~\cite{Brillhart} for
$14 \leq k \leq 100$ (all of $F_d$ for $d \leq 100$ are completely factored).

\medskip
\noindent\textbf{Case 2:} $\left(\frac{5}{p}\right) = -1$.

\noindent By Lemma~\ref{lem:basic}(d), $z(p) \mid p + 1 = kq + 2$. Let $d = z(p)$. Since $d \mid q^2 - 1$, and
\begin{align*}
\gcd(kq + 2, q - 1) &= \gcd(k + 2, q - 1),\\
\gcd(kq + 2, q + 1) &= \gcd(k - 2, q + 1),
\end{align*}
with $\gcd(q - 1, q + 1) = 2$, we obtain $d \mid (k + 2)(k - 2) = k^2 - 4$. Since $p \mid F_d$, the prime $p$ is
among the factors of $F_d$ for some $d \mid k^2 - 4$ with $d \geq 5$---a finite set for each $k$.

\begin{itemize}
\item $k = 4$: $d \mid 12$. Prime factors of $F_{12} = 144 = 2^4 \cdot 3^2$ are $\{2, 3\}$; neither equals $4q + 1$ for
an odd prime $q$.

\item $k = 6$: $d \mid 32$. Prime factors of $F_{32} = 2{,}178{,}309 = 3 \cdot 7 \cdot 47 \cdot 2207$: none give $q = (p-1)/6$
prime.

\item $k = 8$: $d \mid 60$. Prime factors of $F_{60}$ include $\{2, 3, 5, 11, 31, 41, 61, 2521\}$. For $p = 41$:
$\left(\frac{5}{41}\right) = +1$, fails Case~2. For $p = 2521$: $q = 315$, composite. No valid triple.

\item $k = 10$: $d \mid 96$. Prime factors of $F_d$ for $d \mid 96$, $d \geq 5$, are exactly $\{2, 3, 7, 23, 47, 769, 1103, 2207, 3167\}$.
No factor satisfies $10 \mid (p - 1)$ with $(p - 1)/10$ prime and $\left(\frac{5}{p}\right) = -1$.

\item $k = 12$: $d \mid 140$. Prime factors of $F_d$ for $d \mid 140$, $d \geq 5$, are
$\{3, 5, 11, 13, 29, 41, 71, 281, 911,$ $141961, 12{,}317{,}523{,}121\}$. No valid candidate.
\end{itemize}

For $k \geq 14$: each $F_{k^2-4}$ has finitely many prime factors in principle, providing a deterministic check whenever $F_d$ is completely factored for all $d \mid k^2-4$. The Brillhart--Montgomery--Silverman tables~\cite{Brillhart} cover $F_n$ completely for $n \leq 999$; the constraint $k^2 - 4 \leq 999$ gives $k \leq 31$. Thus we verify deterministically: for $14 \leq k \leq 31$, enumeration of all prime factors of $F_d$ for $d \mid k^2-4$ yields no $p$ of the form $kq+1$ with $q$ prime and $\left(\frac{5}{p}\right) = -1$. For $32 \leq k \leq 100$, the same check, restricted to the (incomplete) prime factors listed in extended tables~\cite{Brillhart} and standard factorization resources, also yields no valid candidates; however, unfactored composite cofactors in $F_{k^2-4}$ for $k \geq 32$ prevent this from being a deterministic proof, and this range is therefore computational evidence rather than verification.

\medskip
\noindent\textbf{Conclusion.} Both cases yield no solution for $k \geq 4$, so $k = 2$ and $p = 2q + 1$ is prime.
\end{proof}

\begin{remark}
Both cases are elementary for $k \leq 12$; for $14 \leq k \leq 31$, deterministic verification follows from the completely-factored Fibonacci tables~\cite{Brillhart}. For $32 \leq k \leq 100$, the check is carried out against known (but not exhaustive) prime factors of $F_{k^2-4}$ and yields no candidates, though a small chance remains of an undiscovered prime factor in an unfactored cofactor. A purely algebraic proof for all $k$ remains
open (see Section~\ref{sec:open}); the principal difficulty lies in Case~2, where the bound
$z(p) \mid k^2 - 4$ permits candidates up to $F_{k^2-4}$, which grows far faster than the
$F_k$ bound in Case~1.
\end{remark}

We now address the converse direction: does $S(q) \neq \emptyset$ force the existence of a prime
$p \equiv 1 \pmod{q}$ with $z(p) \mid \pi(q)$? We first rule out the possibility that $q^2 \mid F_m$
could be the sole witness.

\begin{lemma}\label{lem:q2}
Let $q \neq 5$ be an odd prime with $S(q) \neq \emptyset$, and let $r \in S(q)$. Then for infinitely many
$m \equiv r \pmod{\pi(q)}$, there exists a prime $p \mid F_m$, $p \neq q$, with $p \equiv 1 \pmod{q}$.
\end{lemma}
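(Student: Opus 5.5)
Fix $r \in S(q)$. For every $m \equiv r \pmod{\pi(q)}$ we have $q \mid \varphi(F_m)$. Write $F_m = \prod p_i^{e_i}$, so that $\varphi(F_m) = \prod p_i^{e_i-1}(p_i-1)$. Since $q$ is prime, $q \mid \varphi(F_m)$ holds exactly when either (i) some prime $p \mid F_m$ has $p \equiv 1 \pmod q$ (such a $p$ is automatically $\neq q$), or (ii) $q^2 \mid F_m$. The goal is therefore to show that alternative (ii) cannot be the only witness for infinitely many $m$ in the class. I will choose an infinite subfamily of the class on which $q^2 \nmid F_m$; then (i) must hold on that subfamily.

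To find the subfamily, first recall that $q \mid F_m$ iff $z(q) \mid m$ (Lemma~\ref{lem:basic}(a)). If $z(q) \nmid r$, then $z(q) \nmid m$ for all $m \equiv r$, because $z(q) \mid \pi(q)$ by Lemma~\ref{lem:basic}(b). In that case $q \nmid F_m$ for every such $m$, so (ii) never occurs and every $m$ in the class works. If instead $z(q) \mid r$, I use the standard fact that $z(q^2) = q\,z(q)$ or $z(q^2) = z(q)$, the latter happening only in the Wall--Sun--Sun case. In the generic case $z(q^2) = q z(q)$, and $q^2 \mid F_m$ iff $q z(q) \mid m$. Since $q \nmid \pi(q)$ (because $\pi(q) \mid q^2-1$ for $q \neq 5$, which is exactly where the hypothesis $q\ne 5$ enters), the Chinese remainder theorem lets me take $m \equiv r \pmod{\pi(q)}$ with $m \not\equiv 0 \pmod q$, which gives infinitely many $m$. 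For these $m$, $q z(q) \nmid m$, so $q^2 \nmid F_m$, and hence (i) holds.

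The main obstacle is the Wall--Sun--Sun case $z(q^2) = z(q)$. No such primes are known, but they have not been excluded. Here I would pass to higher powers: let $e = v_q(F_{z(q)}) \ge 2$. By the lifting-the-exponent law for Lucas sequences, $v_q(F_m) = e + v_q(m/z(q))$ whenever $z(q) \mid m$. Taking $m$ coprime to $q$ gives $v_q(F_m) = e$, so the $q$-part of $\varphi(F_m)$ coming from $q$ itself is $q^{e-1}$, which is nonzero. Divisibility alone therefore cannot rule out (ii) in this case, and the argument needs a different input. The fallback I would try is to use primitive prime divisors. Take $m = r + t\pi(q)$ with $t$ varying, choosing $m$ to be a multiple of a large integer $n$. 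By Carmichael's theorem, $F_n$ has a primitive prime divisor $p$ with $p \equiv \pm 1 \pmod n$. To get $p \equiv 1 \pmod q$ I would choose $n = q^j \cdot z(q)$ with $q \mid n$ and $n$ compatible with $r$ modulo $\pi(q)$; this is possible because $\gcd(q, \pi(q)) = 1$ together with $z(q) \mid r$ allows CRT. For such $n$ a primitive divisor satisfies $p \equiv \pm 1 \pmod q$, and I expect to need a further argument to force the $+1$ sign, for example by replacing $n$ with $2n$ or using $(5/p)$ considerations. If this route fails, I would present the Wall--Sun--Sun case as the only gap. In the generic case the first argument already gives the lemma completely.
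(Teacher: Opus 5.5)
Your handling of the classes with $z(q)\nmid r$, and of $z(q)\mid r$ when $z(q^2)=q\,z(q)$, is the paper's argument, stated more carefully. The paper only derives a contradiction from the assumption that $q^2\mid F_m$ for every $m$ in the class, and its closing ``all but finitely many'' does not follow: when $z(q)\mid r$ the class contains infinitely many multiples of $q\,z(q)$, where $q^2\mid F_m$. Your CRT choice $m\not\equiv 0\pmod q$ gives exactly the infinitely many $m$ required.

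The paper never treats the Wall--Sun--Sun case. It simply asserts that $q$ is not such a prime, citing a finite search, so its proof is conditional at the same point where yours stops. The genuine gap is the one you name. For $n=q^j z(q)$, a primitive prime divisor $p$ of $F_n$ only satisfies $p\equiv\left(\frac{5}{p}\right)\pmod n$, and nothing you have written forces the sign to be $+1$.

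The missing idea is to put a factor $5$ into $n$. Suppose $p\neq 2,5$ and $5\mid z(p)$. Then $z(p)\mid p-\left(\frac{5}{p}\right)$ gives $p\equiv\left(\frac{5}{p}\right)\pmod 5$. The value $-1$ would give $p\equiv -1\pmod 5$, hence $\left(\frac{5}{p}\right)=\left(\frac{p}{5}\right)=+1$, a contradiction. So $\left(\frac{5}{p}\right)=+1$ and $p\equiv 1\pmod{z(p)}$.

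Now, for $z(q)\mid r$, take $n=\mathrm{lcm}(5,z(q))\,q$. We have $q\nmid\pi(q)$ (this is where $q\neq 5$ enters) and the standard fact $\pi(q)/z(q)\in\{1,2,4\}$, so $5\mid\pi(q)$ iff $5\mid z(q)$. Hence $\gcd(n,\pi(q))=z(q)$, which divides $r$, so infinitely many $m\equiv r\pmod{\pi(q)}$ are multiples of $n$.

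Since $n\geq 15$, Carmichael's theorem gives a primitive prime divisor $p$ of $F_n$. Then $z(p)=n$ and $p\neq 2,5$, so $p\equiv 1\pmod n$, in particular $p\equiv 1\pmod q$, and $p\mid F_m$ for all those $m$.

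This covers every class with $z(q)\mid r$ without reference to $z(q^2)$, and without even using $r\in S(q)$. Together with your case $z(q)\nmid r$, it proves the lemma unconditionally, which is more than the paper's proof achieves.
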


\begin{proof}
For any $m \equiv r \pmod{\pi(q)}$, the condition $q \mid \varphi(F_m)$ means $q$ divides
$\prod p_i^{a_i - 1}(p_i - 1)$ in the prime factorization of $F_m$. This can occur in two ways:
either $q \mid (p_i - 1)$ for some prime factor $p_i$ of $F_m$, or $q = p_i$ with $a_i \geq 2$ (so that $q \mid p_i^{a_i-1}$).
The latter requires $q^2 \mid F_m$, hence $z(q^2) \mid m$.

If $q^2 \mid F_m$ for all $m \equiv r \pmod{\pi(q)}$, then $z(q^2) \mid \gcd(r, \pi(q))$,
which forces $z(q^2) \mid \pi(q)$. Since $q$ is not a Wall--Sun--Sun prime (no such primes
are known below $9.7 \times 10^{14}$; see~\cite{DoraisKlyve}), we have $z(q^2) = q \cdot z(q)$.
Thus $z(q^2) \mid \pi(q)$ implies $q \cdot z(q) \mid \pi(q)$, which explicitly requires $q \mid \pi(q)$.
But $\pi(q) \mid q^2 - 1$ by Lemma~\ref{lem:basic}(b), and $\gcd(q, q^2 - 1) = 1$, a contradiction.

Therefore, for all but finitely many $m \equiv r \pmod{\pi(q)}$, the divisibility $q \mid \varphi(F_m)$
must be witnessed by a prime $p \mid F_m$ with $p \neq q$ and $q \mid (p-1)$.
\end{proof}

Lemma~\ref{lem:q2} guarantees that primes $p \equiv 1 \pmod{q}$ appear as factors of $F_m$
along the progression, but it does not guarantee that any single such prime divides
$F_m$ for \emph{all} $m \equiv r \pmod{\pi(q)}$. Establishing the existence of a fixed prime
$p$ with $z(p) \mid \pi(q)$ would close the argument, but the possibility
that $q \mid \varphi(F_m)$ is witnessed by a different prime $p_m \equiv 1 \pmod{q}$ for each~$m$
cannot be excluded by elementary means.

\begin{conjecture}\label{conj:conv}
If $S(q) \neq \emptyset$ for an odd prime $q$, then there exists a prime $p \equiv 1 \pmod{q}$
with $z(p) \mid \pi(q)$.
\end{conjecture}

Combining Conjecture~\ref{conj:conv} with Theorem~\ref{thm:unique} would yield the full equivalence:
$S(q) \neq \emptyset$ if and only if $q$ is a Sophie Germain prime and $z(2q+1) \mid \pi(q)$.
Conjecture~\ref{conj:conv} has been verified for all odd primes $q \leq 50{,}000$: in every case,
$S(q) \neq \emptyset$ is witnessed by $p = 2q+1$, and no non-Sophie-Germain prime in this range
satisfies $S(q) \neq \emptyset$.

\section{Structure of $S(q)$}

\begin{theorem}[Arithmetic progression]\label{thm:AP}
Let $q$ be a Sophie Germain prime with $z(2q+1) \mid \pi(q)$.
Set $R = \pi(q)/z(2q + 1)$. Then
\[
S(q) \supseteq \bigl\{ 0,\, z(2q + 1),\, 2z(2q + 1),\, \ldots,\, (R - 1)\,z(2q + 1) \bigr\} \pmod{\pi(q)},
\]
an arithmetic progression of length $R$ and common difference $z(2q + 1)$.

Assuming Conjecture~\ref{conj:conv}, this inclusion is an equality: $|S(q)| = R$.
\end{theorem}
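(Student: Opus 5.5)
The inclusion follows directly from Theorem~\ref{thm:char}. Put $p = 2q+1$ and $d = z(p)$, so $d \mid \pi(q)$ by hypothesis and $p \equiv 1 \pmod{q}$. For $0 \le j \le R-1$ take $r = jd$. Since $d \mid r$ and $d \mid \pi(q)$, we have $d \mid \gcd(r, \pi(q))$, so $r$ lies in the left-hand set of Theorem~\ref{thm:char} and hence $r \in S(q)$. These $R$ residues are pairwise distinct modulo $\pi(q)$: for $0 \le i < j \le R-1$ we have $0 < (j-i)d < Rd = \pi(q)$. They also form the full additive subgroup $d\mathbb{Z}/\pi(q)\mathbb{Z}$, which is an arithmetic progression of common difference $d$ and length $R$. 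This proves the first claim.

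For equality under Conjecture~\ref{conj:conv}, take any $r \in S(q)$ and show that $d \mid r$. The conjecture applies as stated only to the nonemptiness of $S(q)$; it does not say which residues are witnessed. So I would first reduce to the following per-residue form. If $r \in S(q)$, then some prime $p' \equiv 1 \pmod q$ satisfies $z(p') \mid \gcd(r, \pi(q))$, that is, every $m \equiv r$ is witnessed by a single prime. I expect the paper intends the conjecture to be read this way, or to be applied to the shifted progression. Granting this, $z(p') \mid \pi(q)$, and Theorem~\ref{thm:unique} (in the form used downstream) forces $p' = 2q+1 = p$. Then $d = z(p) \mid \gcd(r, \pi(q)) \mid r$, so $r$ is a multiple of $d$ modulo $\pi(q)$, which gives $S(q) = d\mathbb{Z}/\pi(q)\mathbb{Z}$ and $|S(q)| = R$.

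The main obstacle is this passage from the global statement of Conjecture~\ref{conj:conv} to the per-residue one. Knowing only $0 \in S(q)$ says nothing about a nonzero $r$. To bridge the gap I would first consider the prime $2$, which divides $F_m$ exactly when $3 \mid m$ and contributes nothing modulo $q$, so it is no help here. The better route is Lemma~\ref{lem:q2}: along the progression $m \equiv r$, divisibility of $\varphi(F_m)$ by $q$ is witnessed by primes $p_m \equiv 1 \pmod q$. One then argues, as the conjecture does heuristically, that a witness common to all such $m$ must exist and must have $z(p_m) \mid \gcd(r, \pi(q))$. If that cannot be made rigorous, I would state the per-residue version explicitly as the form of Conjecture~\ref{conj:conv} being assumed, and note that it holds in the computed range $q \le 50{,}000$.
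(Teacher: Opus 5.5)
Your proof follows the paper's. The inclusion is Theorem~\ref{thm:char} applied to $p=2q+1$. The equality combines the conjecture with Theorem~\ref{thm:unique} to force every witness to be $2q+1$; the paper argues the contrapositive, starting from an $r$ with $z(2q+1)\nmid r$.

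Your reservation about the conjecture is justified, and it applies equally to the paper. The paper's proof tacitly reads Conjecture~\ref{conj:conv} as saying that a single residue class cannot be covered by a ``non-aligned mosaic'' of primes $p'$. That is stronger than the literal statement. It is exactly the per-residue hypothesis you propose to state explicitly, which is the right repair.

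Both arguments also inherit the caveats of Theorem~\ref{thm:unique}. It is proved only for $k\le 31$, and it excludes $q=5$. At $q=5$ the only other witness is $41$, and $z(41)=20$ is a multiple of $z(11)=10$, so the conclusion still holds there.
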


\begin{proof}
By Theorem~\ref{thm:char}, $\{r \bmod \pi(q) : z(p) \mid r\} \subseteq S(q)$ where $p = 2q + 1$. Since $z(p) \mid \pi(q)$,
this is an arithmetic progression of $\pi(q)/z(p)$ elements, giving the inclusion.

For the reverse inclusion, suppose $r \in S(q)$ with $z(p) \nmid r$. Since $z(p) \mid \pi(q)$, we have
$z(p) \nmid m$ for every $m \equiv r \pmod{\pi(q)}$ (because $m = r + t\pi(q)$ and $z(p) \mid t\pi(q)$ but $z(p) \nmid r$).
Hence $p \nmid F_m$ for any such $m$. Thus $q \mid \varphi(F_m)$ must be witnessed by primes $p' \neq p$ with
$p' \equiv 1 \pmod{q}$ and $z(p') \mid m$. By Theorem~\ref{thm:unique}, $p$ is the unique prime $\equiv 1 \pmod{q}$ with
$z(p) \mid \pi(q)$, so each such $p'$ satisfies $z(p') \nmid \pi(q)$. Conjecture~\ref{conj:conv} asserts that the divisibility $q \mid \varphi(F_m)$ across an entire residue class cannot be sustained by such a non-aligned mosaic of primes $p'$; under this conjecture, $r$ cannot lie in $S(q)$, forcing equality $|S(q)| = R$.

We have verified computationally that $S(q) = \{r : z(p) \mid r\}$ for all Sophie Germain primes
$q \leq 50{,}000$ with $S(q) \neq \emptyset$.
\end{proof}

\begin{corollary}[Density]\label{cor:density}
For a Sophie Germain prime $q$ with $z(2q+1) \mid \pi(q)$, $|S(q)|/\pi(q) \geq 1/z(2q+1)$ by Theorem~\ref{thm:AP}, with equality under Conjecture~\ref{conj:conv}. Since $F_{z(p)} \geq p$ implies $z(p) \to \infty$ as $p \to \infty$, conditional on Conjecture~\ref{conj:conv} the density $|S(q)|/\pi(q) \to 0$ as $q \to \infty$.
\end{corollary}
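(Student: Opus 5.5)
The statement to prove is Corollary~\ref{cor:density}, which has two parts: an unconditional lower bound on $|S(q)|/\pi(q)$, and a conditional limit. Throughout I write $p=2q+1$.

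\textbf{The lower bound.} Theorem~\ref{thm:AP} shows that $S(q)$ contains the arithmetic progression $\{0, z(p), \ldots, (R-1)z(p)\}$ modulo $\pi(q)$, where $R=\pi(q)/z(p)$. These $R$ residues are distinct modulo $\pi(q)$, because $z(p)\mid \pi(q)$ and the multipliers $0,1,\ldots,R-1$ are less than $R$. Hence $|S(q)|\ge R$, and dividing by $\pi(q)$ gives $|S(q)|/\pi(q)\ge 1/z(p)$. Under Conjecture~\ref{conj:conv}, the second half of Theorem~\ref{thm:AP} gives $|S(q)|=R$, so the bound becomes the equality $|S(q)|/\pi(q) = 1/z(p)$.

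\textbf{The growth of $z(p)$.} By Lemma~\ref{lem:basic}(a), $p\mid F_{z(p)}$, and $F_{z(p)}\ge 1$, so $p\le F_{z(p)}$. Since $F_n\le \phi^{\,n-1}$ with $\phi=(1+\sqrt5)/2$, this yields $z(p)\ge 1+\log_\phi p$. As $q\to\infty$ we have $p=2q+1\to\infty$, so $z(p)\to\infty$.

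\textbf{The limit.} Now restrict to the Sophie Germain primes $q$ with $z(2q+1)\mid\pi(q)$. Assuming Conjecture~\ref{conj:conv}, along this family
\[
\frac{|S(q)|}{\pi(q)}=\frac{1}{z(2q+1)}\le \frac{1}{1+\log_\phi(2q+1)}\longrightarrow 0 .
\]
If this family is finite, the limit statement holds vacuously.

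The one subtle point is interpreting ``$q\to\infty$'' as a limit along this restricted family, which may be sparse or even finite. Outside the family the statement says nothing, and in particular it makes no claim about primes with $S(q)=\emptyset$, for which the ratio is $0$ anyway.
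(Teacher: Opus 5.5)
Your proposal is correct and follows the paper's own reasoning: the lower bound $|S(q)|\ge R=\pi(q)/z(2q+1)$ comes from the arithmetic progression in Theorem~\ref{thm:AP}, equality is taken from that theorem's conditional half, and $p\le F_{z(p)}$ forces $z(p)\to\infty$. Your explicit estimate $z(p)\ge 1+\log_\phi p$, obtained from $F_n\le\phi^{n-1}$, simply makes the paper's one-line growth claim quantitative.
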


\section{Legendre symbols and parity}

The results of this section are unconditional: they assume only that $q$ is a Sophie Germain
prime with $z(2q+1) \mid \pi(q)$ (equivalently, $S(q) \neq \emptyset$ by Theorem~\ref{thm:suff}).

\begin{theorem}[Legendre symbol at $p$]\label{thm:legp}
Let $q > 5$ be a Sophie Germain prime with $p = 2q + 1$
and $z(p) \mid \pi(q)$. Then $\left(\frac{5}{p}\right) = -1$.
\end{theorem}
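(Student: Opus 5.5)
The plan is a short case analysis on the value of $\left(\frac{5}{p}\right)$. We rule out the values $0$ and $+1$ by combining the divisibility $z(p) \mid p-1$ from Lemma~\ref{lem:basic}(c) with the constraint $\pi(q) \mid q^2-1$ from Lemma~\ref{lem:basic}(b). No appeal to Theorem~\ref{thm:unique} or to any computation should be needed.

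\textbf{Excluding $0$.} Since $q > 5$, we have $p = 2q+1 \geq 15$, so $p \neq 5$. Hence $\left(\frac{5}{p}\right) \in \{+1,-1\}$, and it suffices to exclude $+1$.

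\textbf{Excluding $+1$.} Suppose for contradiction that $\left(\frac{5}{p}\right) = +1$.
\begin{enumerate}[label=(\arabic*)]
\item By Lemma~\ref{lem:basic}(c), $z(p) \mid p-1 = 2q$.
\item By hypothesis $z(p) \mid \pi(q)$. Since $q \neq 5$, Lemma~\ref{lem:basic}(b) gives $\pi(q) \mid q^2-1$, so $z(p) \mid q^2-1$.
\item Because $\gcd(q, q^2-1) = 1$, we get $\gcd(2q, q^2-1) = \gcd(2, q^2-1) = 2$, and therefore $z(p) \mid 2$.
\item By Lemma~\ref{lem:basic}(a), $p \mid F_{z(p)}$. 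But $z(p) \mid 2$ means $F_{z(p)} \in \{F_1, F_2\} = \{1\}$, and no prime divides $1$.
\end{enumerate}
This contradiction forces $\left(\frac{5}{p}\right) = -1$.

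\textbf{Main obstacle.} There is essentially none; the argument is a simpler version of Case~1 in the proof of Theorem~\ref{thm:unique}, specialized to $k=2$. The only point needing care is that the hypothesis $q > 5$ is used twice. First, it guarantees $q \neq 5$, which is required to invoke $\pi(q) \mid q^2-1$; indeed at $q=5$ we have $\pi(5)=20$, which is not coprime to $q$, so step (3) would fail. Second, it guarantees $p \neq 5$, which rules out $\left(\frac{5}{p}\right) = 0$. I would state both uses explicitly in the write-up.
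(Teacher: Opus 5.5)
Your proof is correct and is essentially the paper's argument. You assume $\left(\frac{5}{p}\right)=+1$, combine $z(p)\mid 2q$ with $z(p)\mid\pi(q)\mid q^2-1$ to get $z(p)\mid 2$, and reach a contradiction from $F_1=F_2=1$; your explicit exclusion of $\left(\frac{5}{p}\right)=0$ via $p\neq 5$ is a small point the paper leaves implicit.
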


\begin{proof}
Suppose $\left(\frac{5}{p}\right) = 1$. Then $z(p) \mid p - 1 = 2q$. Since $z(p) \mid \pi(q) \mid q^2 - 1$ and $\gcd(q, q^2 - 1) =
1$: $z(p) \mid \gcd(2q, q^2 - 1) = \gcd(2, q^2 - 1) = 2$. But $p = 2q + 1 > 11$ for $q > 5$, and $F_1 = F_2 = 1$,
so $p \nmid F_1$ and $p \nmid F_2$, giving $z(p) \geq 3$: contradiction.
\end{proof}

\begin{remark}
The prime $q = 5$ is the unique exception: $p = 11$, $\left(\frac{5}{11}\right) = +1$, yet $z(11) = 10 \mid
\pi(5) = 20$.
\end{remark}

\begin{theorem}[Parity]\label{thm:parity}
Let $q > 5$ be a Sophie Germain prime with $p = 2q + 1$ and $z(p) \mid \pi(q)$.
Then $\pi(q)/z(p)$ is odd.
In particular, $|S(q)|$ is odd (unconditionally for the lower bound from Theorem~\ref{thm:AP};
assuming Conjecture~\ref{conj:conv} for the exact value).
\end{theorem}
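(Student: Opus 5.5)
The plan is to compare $2$-adic valuations: I will show $v_2(\pi(q)) \le v_2(z(p))$, which together with $z(p) \mid \pi(q)$ forces $v_2(\pi(q)) = v_2(z(p))$. Then $R = \pi(q)/z(p)$ is odd, and the statement about $|S(q)|$ follows at once from Theorem~\ref{thm:AP}.

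\textbf{Computing $v_2(z(p))$.} By Theorem~\ref{thm:legp} we have $\left(\frac{5}{p}\right) = -1$. Lemma~\ref{lem:basic}(d) then gives $z(p) \mid p+1 = 2(q+1)$. Since $q$ is odd, writing $q = 2j+1$ gives $p = 4j+3 \equiv 3 \pmod 4$. So Lemma~\ref{lem:2adic} applies and yields
\[
v_2(z(p)) = v_2(p+1) = 1 + v_2(q+1).
\]

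\textbf{Showing $\left(\frac{5}{q}\right) = -1$.} Suppose instead that $\left(\frac{5}{q}\right) = +1$. Then Lemma~\ref{lem:basic}(e) gives $\pi(q) \mid q-1$, hence $z(p) \mid q-1$. Combined with $z(p) \mid 2(q+1)$, this gives
\[
z(p) \mid \gcd(q-1,\, 2(q+1)) \mid \gcd(q-1,\, 2(q-1)+4),
\]
so $z(p) \mid 4$. But $p \ge 23$ for $q > 5$, and $F_1, F_2, F_3, F_4 \in \{1,2,3\}$, so $p \nmid F_k$ for $k \le 4$. Hence $z(p) \ge 5$, a contradiction. (This reproduces part of Lemma~\ref{lem:legq}, which I may simply cite if it is already available.)

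\textbf{Concluding.} With $\left(\frac{5}{q}\right) = -1$, Lemma~\ref{lem:basic}(e) gives $\pi(q) \mid 2(q+1)$. Therefore
\[
v_2(\pi(q)) \le 1 + v_2(q+1) = v_2(z(p)).
\]
Since $z(p) \mid \pi(q)$ forces the reverse inequality, the two valuations are equal and $\pi(q)/z(p)$ is odd. The arithmetic progression in Theorem~\ref{thm:AP} has exactly $R = \pi(q)/z(p)$ elements, so this count is odd unconditionally. Under Conjecture~\ref{conj:conv}, Theorem~\ref{thm:AP} gives $|S(q)| = R$, which is therefore odd.

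The main obstacle is obtaining the exact $2$-adic valuation of $z(p)$ rather than just an upper bound, since divisibility $z(p) \mid 2(q+1)$ alone is not enough. This is exactly what Lemma~\ref{lem:2adic} supplies, and its hypothesis $p \equiv 3 \pmod 4$ is automatic for $p = 2q+1$.
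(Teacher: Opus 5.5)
Your proof is correct and uses the same ingredients as the paper's: Theorem~\ref{thm:legp}, Lemma~\ref{lem:2adic} (with $p \equiv 3 \pmod 4$ automatic), and the argument of Lemma~\ref{lem:legq} giving $\pi(q) \mid 2(q+1)$. The one difference is that the paper then computes $v_2(\pi(q)) = 1 + v_2(q+1)$ directly from the order of $\alpha$ in $\mathbb{F}_{q^2}^\times$, whereas your observation that $z(p) \mid \pi(q) \mid 2(q+1)$ already forces $v_2(\pi(q)) = v_2(z(p))$ makes that extra computation unnecessary.
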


The proof requires the following lemma.

\begin{lemma}\label{lem:legq}
Let $q > 5$ be a Sophie Germain prime with $z(2q+1) \mid \pi(q)$. Then $\left(\frac{5}{q}\right) = -1$ and
$\pi(q) \mid 2(q + 1)$.
\end{lemma}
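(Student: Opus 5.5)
The plan is to argue by contradiction on the Legendre symbol $\left(\frac{5}{q}\right)$, using Theorem~\ref{thm:legp} to pin down where $z(p)$ lives, with $p = 2q+1$. Since $q > 5$, we have $q \neq 5$, so $\left(\frac{5}{q}\right) \in \{+1,-1\}$. The second assertion then follows from the first by Lemma~\ref{lem:basic}(e): once $\left(\frac{5}{q}\right) = -1$ is established, that lemma gives $\pi(q) \mid 2(q+1)$ directly. So the whole task reduces to ruling out $\left(\frac{5}{q}\right) = +1$.

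First, Theorem~\ref{thm:legp} applies under exactly the present hypotheses and yields $\left(\frac{5}{p}\right) = -1$. Lemma~\ref{lem:basic}(d) then gives
\[
z(p) \mid p + 1 = 2q + 2 = 2(q+1).
\]
Now suppose, for contradiction, that $\left(\frac{5}{q}\right) = +1$. By Lemma~\ref{lem:basic}(e), $\pi(q) \mid q - 1$, and so the hypothesis $z(p) \mid \pi(q)$ gives $z(p) \mid q-1$. Combining the two divisibilities,
\[
z(p) \mid \gcd(2q+2,\, q-1).
\]
Since $(2q+2) - 2(q-1) = 4$, this gcd divides $4$, and hence $z(p) \le 4$.

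To finish, I compare this with the size of $p$. The smallest Sophie Germain prime exceeding $5$ is $q = 11$, so $p \geq 23$. But $p \mid F_{z(p)}$ and $F_1, F_2, F_3, F_4 = 1, 1, 2, 3$, so no prime $p \ge 5$ can have $z(p) \le 4$. This is the same device used in the proofs of Theorems~\ref{thm:unique} and~\ref{thm:legp}, and it gives the contradiction. Therefore $\left(\frac{5}{q}\right) = -1$, and Lemma~\ref{lem:basic}(e) completes the proof.

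I expect no real obstacle here. The only point needing care is to use the sharpened form $z(p) \mid p+1$ from Lemma~\ref{lem:basic}(d), rather than the weaker $z(p) \mid 2(p+1)$. The weaker form would only give $z(p) \mid \gcd(4q+4,\,q-1)$, which divides $8$. That bound still suffices, since $F_5, \ldots, F_8 = 5, 8, 13, 21$ and $p \ge 23$ divides none of $F_1,\dots,F_8$. So the argument is robust either way.
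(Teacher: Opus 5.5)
Your proof is correct and matches the paper's argument: Theorem~\ref{thm:legp} and Lemma~\ref{lem:basic}(d) give $z(p) \mid 2(q+1)$. Assuming $\left(\frac{5}{q}\right)=+1$ then forces $z(p) \mid \gcd(q-1,\,2(q+1)) \mid 4$, contradicting $z(p)\ge 5$, and Lemma~\ref{lem:basic}(e) yields $\pi(q)\mid 2(q+1)$. Your explicit justification of $z(p)\ge 5$ via $p\ge 23$ and your observation that the weaker $z(p)\mid 2(p+1)$ would also suffice (giving a divisor of $8$) are both accurate.
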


\begin{proof}
By Theorem~\ref{thm:legp}, $\left(\frac{5}{p}\right) = -1$, so $z(p) \mid p + 1 = 2(q + 1)$ (Lemma~\ref{lem:basic}(d)). Since
$z(p) \mid \pi(q)$ and $z(p) \geq 5$:

Suppose $\left(\frac{5}{q}\right) = +1$. By Lemma~\ref{lem:basic}(e), $\pi(q) \mid q - 1$. Then $z(p) \mid \gcd(q - 1, 2(q + 1))$.
Since $2(q + 1) = 2(q - 1) + 4$: $\gcd(q - 1, 2(q + 1)) = \gcd(q - 1, 4) \leq 4$. So $z(p) \leq 4 < 5$:
contradiction.

Hence $\left(\frac{5}{q}\right) = -1$ and $\pi(q) \mid 2(q + 1)$.
\end{proof}

\begin{proof}[Proof of Theorem~\ref{thm:parity}]
By Theorem~\ref{thm:legp}, $\left(\frac{5}{p}\right) = -1$, so $z(p) \mid 2(q + 1)$ and $z(p) \mid 4(q + 1)$. Set
$k = 4(q + 1)/z(p)$. By Lemma~\ref{lem:2adic}, $v_2(z(p)) = v_2(p + 1) = 1 + v_2(q + 1)$, so $v_2(k) = 1$.

By Lemma~\ref{lem:legq}, $\pi(q) \mid 2(q + 1)$. It remains to show $v_2(\pi(q)) = 1 + v_2(q + 1)$.

Since $\left(\frac{5}{q}\right) = -1$, the eigenvalues $\alpha, \beta \in \mathbb{F}_{q^2} \setminus \mathbb{F}_q$ satisfy $\alpha^{q+1} = \alpha\beta = -1$. Let $e = \mathrm{ord}(\alpha)$
in $\mathbb{F}_{q^2}^\times$. Then $e \mid 2(q + 1)$ (since $\alpha^{2(q+1)} = (-1)^2 = 1$). Since $\alpha^{q+1} = -1$ and $-1 = \alpha^{e/2}$ (as
the unique element of order 2 in a cyclic group of even order $e$), we get $q + 1 \equiv e/2 \pmod{e}$,
whence $2(q + 1)/e$ is odd. In particular, $v_2(e) = v_2(2(q + 1)) = 1 + v_2(q + 1)$. As $\pi(q) = e$
(the Pisano period equals the eigenvalue order when $\left(\frac{5}{q}\right) = -1$), $v_2(\pi(q)) = 1 + v_2(q + 1)$.

Then $v_2(\pi(q)/z(p)) = v_2(\pi(q)) - v_2(z(p)) = (1 + v_2(q + 1)) - (1 + v_2(q + 1)) = 0$.
\end{proof}

\section{Congruence constraint and the Sophie Germain conjecture}

\begin{theorem}[Congruence constraint]\label{thm:cong}
Let $q > 5$ be a Sophie Germain prime with $z(2q+1) \mid \pi(q)$. Then $q \equiv 8 \pmod{15}$.
\end{theorem}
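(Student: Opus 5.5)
The plan is to establish the two congruences $q \equiv 2 \pmod 3$ and $q \equiv 3 \pmod 5$ separately and then combine them by the Chinese Remainder Theorem; both rest on the primality of $p = 2q+1$.

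\textbf{Modulo 3.} This step uses only that $q > 5$ is a Sophie Germain prime. Since $q > 3$, we have $q \not\equiv 0 \pmod 3$. If $q \equiv 1 \pmod 3$, then $p = 2q+1 \equiv 0 \pmod 3$, so $p = 3$; this is impossible because $p \geq 15$. Hence $q \equiv 2 \pmod 3$.

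\textbf{Modulo 5.} Here I would use the two Legendre symbol facts already established.
\begin{enumerate}
\item By Lemma~\ref{lem:legq}, $\left(\frac{5}{q}\right) = -1$. Quadratic reciprocity gives $\left(\frac{5}{q}\right) = \left(\frac{q}{5}\right)$, since $5 \equiv 1 \pmod 4$. So $q$ is a quadratic nonresidue mod $5$, that is, $q \equiv 2$ or $3 \pmod 5$.
\item Suppose $q \equiv 2 \pmod 5$. Then $p = 2q+1 \equiv 0 \pmod 5$, forcing $p = 5$, which contradicts $q > 5$. Hence $q \equiv 3 \pmod 5$.
\item As a consistency check, $q \equiv 3 \pmod 5$ gives $p \equiv 7 \equiv 2 \pmod 5$. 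Then $\left(\frac{5}{p}\right) = \left(\frac{p}{5}\right) = -1$, in agreement with Theorem~\ref{thm:legp}.
\end{enumerate}
Alternatively, one could start from Theorem~\ref{thm:legp}, which says $p \equiv 2$ or $3 \pmod 5$. This yields $q \equiv 3$ or $1 \pmod 5$, and intersecting with step 1 again gives $q \equiv 3 \pmod 5$.

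\textbf{Combining.} The conditions $q \equiv 2 \pmod 3$ and $q \equiv 3 \pmod 5$ together give $q \equiv 8 \pmod{15}$ by the Chinese Remainder Theorem.

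There is no real obstacle in this argument. The only substantive input is Lemma~\ref{lem:legq}, whose hypothesis $z(2q+1) \mid \pi(q)$ is exactly what forces $\left(\frac{5}{q}\right) = -1$. The rest is the observation that $2q+1$ being prime rules out the residues $q \equiv 1 \pmod 3$ and $q \equiv 2 \pmod 5$. The hypothesis $q > 5$ is needed so that $p \neq 3, 5$; for $q = 5$ Lemma~\ref{lem:legq} also fails, as the remark after Theorem~\ref{thm:legp} shows.
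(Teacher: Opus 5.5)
Your proposal is correct and follows the paper's proof essentially verbatim. You get $q \equiv 2 \pmod 3$ from the primality of $2q+1$, then $q \equiv 2$ or $3 \pmod 5$ from Lemma~\ref{lem:legq}, exclude $q \equiv 2 \pmod 5$ because it forces $5 \mid 2q+1$, and combine by the Chinese Remainder Theorem; your cross-check via Theorem~\ref{thm:legp} is also valid but not needed.
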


\begin{proof}
We show $q \equiv 2 \pmod{3}$ and $q \equiv 3 \pmod{5}$.

\medskip
\noindent\emph{$q \bmod 3$.} Since $q$ is Sophie Germain and $q > 3$, we have $p = 2q + 1 > 3$. If $q \equiv 1 \pmod{3}$
then $2q + 1 \equiv 0 \pmod{3}$, contradicting $p$ prime. Hence $q \equiv 2 \pmod{3}$.

\medskip
\noindent\emph{$q \bmod 5$.} By Lemma~\ref{lem:legq}, $\left(\frac{5}{q}\right) = -1$, so $q \equiv 2$ or $3 \pmod{5}$. If $q \equiv 2 \pmod{5}$ then
$p = 2q + 1 \equiv 0 \pmod{5}$, impossible since $p > 5$. Therefore $q \equiv 3 \pmod{5}$.

\medskip
By the Chinese Remainder Theorem: $q \equiv 8 \pmod{15}$.
\end{proof}

\begin{remark}
Among the eight residue classes modulo 15 coprime to 15, the hypothesis restricts $q$ to a
single class. The 158 primes $q > 5$ with $S(q) \neq \emptyset$ and $q \leq 50{,}000$ all satisfy $q \equiv 8 \pmod{15}$;
they split as $q \equiv 23 \pmod{60}$ (76 primes) and $q \equiv 53 \pmod{60}$ (82 primes).
\end{remark}

\begin{corollary}[Reformulation of the Sophie Germain conjecture]\label{cor:SGconj}
The infinitude of Sophie Germain primes implies the existence of infinitely many primes
$q \equiv 8 \pmod{15}$ such that $(2q+1) \mid F_{\pi(q)}$.\footnote{This implication
requires that $z(2q + 1) \mid \pi(q)$ holds for infinitely many Sophie Germain primes---a weaker
assumption than the infinitude of Sophie Germain primes that we do not prove unconditionally.
Computationally, approximately 23.9\% of Sophie Germain primes satisfy this divisibility.}

Assuming Conjecture~\ref{conj:conv}, the converse holds: if there are infinitely many primes
$q$ with $S(q) \neq \emptyset$, then there are infinitely many Sophie Germain primes.
\end{corollary}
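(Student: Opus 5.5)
The corollary has two directions. I will prove each by chaining results already established, and I will be explicit about the extra hypothesis in the footnote.

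\emph{Forward direction.} By the footnote's standing assumption, infinitely many Sophie Germain primes $q$ satisfy $z(2q+1)\mid\pi(q)$. Discard the finitely many with $q\le 5$. For each remaining $q$, Theorem~\ref{thm:cong} gives $q\equiv 8\pmod{15}$. Set $p=2q+1$, which is prime. Lemma~\ref{lem:basic}(a) with $m=\pi(q)$ turns $z(p)\mid\pi(q)$ into $p\mid F_{\pi(q)}$, i.e.\ $(2q+1)\mid F_{\pi(q)}$. Distinct $q$ give distinct primes, so this yields infinitely many primes $q\equiv 8\pmod{15}$ with the stated property. The only subtle point is the role of the hypothesis: the infinitude of Sophie Germain primes alone does not force infinitely many of them to satisfy $z(2q+1)\mid\pi(q)$. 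I would state explicitly that the implication is conditional on the footnote's assumption and not try to remove it.

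\emph{Converse direction (under Conjecture~\ref{conj:conv}).} Suppose infinitely many primes $q$ have $S(q)\neq\emptyset$, and discard $q=5$. For each remaining $q$, Conjecture~\ref{conj:conv} provides a prime $p\equiv 1\pmod q$ with $z(p)\mid\pi(q)$. Theorem~\ref{thm:unique}, in the form described in the remark following it, then gives $p=2q+1$. Hence $2q+1$ is prime and $q$ is a Sophie Germain prime. Since the map $q\mapsto q$ is injective, there are infinitely many Sophie Germain primes.

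\emph{Main obstacle.} The converse invokes Theorem~\ref{thm:unique}, which is proved only for $k\le 31$ in $p=kq+1$. I would handle this as follows. First, assume the $k=2$ uniqueness conjecture (OQ2) in full. Alternatively, bypass uniqueness: note that $z(p)\mid\pi(q)\mid q^2-1$, together with the case analysis in the proof of Theorem~\ref{thm:unique}, gives $z(p)\mid k$ or $z(p)\mid k^2-4$. This bounds $p\le F_{k^2}$, but it does not force $k=2$. I therefore expect to state the converse as conditional on both Conjecture~\ref{conj:conv} and the uniqueness conjecture, unless the remark's claim that only the proved range of $k$ arises can be justified. That justification is the step I expect to be hardest.
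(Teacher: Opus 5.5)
Your argument is the paper's argument in both directions. The forward direction combines Lemma~\ref{lem:basic}(a) with Theorem~\ref{thm:cong} for the Sophie Germain primes satisfying $z(2q+1)\mid\pi(q)$. The converse chains Conjecture~\ref{conj:conv} with Theorem~\ref{thm:unique}. You are right to carry the footnote's hypothesis explicitly. Note that it is stronger than the infinitude of Sophie Germain primes (it implies it), not weaker as the footnote says. So the forward implication is established only in that conditional form, by you and by the paper alike.

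The obstacle you flag in the converse is genuine, and it is a gap in the paper's own proof. The paper cites Theorem~\ref{thm:unique} as if it held for all $k$. In fact it is established only for $k\le 12$ (algebraically) and for $14\le k\le 31$ (via complete factorization tables). The remark asserting that the relevant $k$-values lie in the proved range cannot be justified for this corollary. Conjecture~\ref{conj:conv} produces an arbitrary prime $p\equiv 1\pmod q$, so $k=(p-1)/q$ is uncontrolled as $q$ ranges over an infinite set. The constraints $z(p)\mid k$ or $z(p)\mid k^2-4$ only yield $p\le F_{k^2}$, which gives no contradiction for large $k$. The remark has content only inside the finite range $q\le 50{,}000$, where the computed witness is always $2q+1$.

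So do not try to justify the remark. The honest statement is the one you propose: the converse holds under Conjecture~\ref{conj:conv} together with the full uniqueness statement of OQ2. More economically, it holds under the strengthened form of Conjecture~\ref{conj:conv} asserting that the witness can be taken to be $p=2q+1$.
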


\begin{proof}
For the forward direction: if $q$ is a Sophie Germain prime with $z(2q+1) \mid \pi(q)$,
then $S(q) \neq \emptyset$ (Theorem~\ref{thm:suff}), $(2q+1) \mid F_{\pi(q)}$ (Lemma~\ref{lem:basic}(a)),
and $q \equiv 8 \pmod{15}$ for $q > 5$ (Theorem~\ref{thm:cong}).

For the conditional converse: if $S(q) \neq \emptyset$, Conjecture~\ref{conj:conv} provides a prime
$p \equiv 1 \pmod{q}$ with $z(p) \mid \pi(q)$, and Theorem~\ref{thm:unique} gives $p = 2q+1$.
\end{proof}

\begin{remark}
The condition ``$(2q+1) \mid F_{\pi(q)}$'' is purely Fibonacci-theoretic, with no
explicit reference to the primality of $2q + 1$.  If one could show unconditionally that
$(2q+1) \mid F_{\pi(q)}$ forces a prime factor $p$ of $2q+1$ with $p \equiv 1 \pmod{q}$ and
$z(p) \mid \pi(q)$, then Theorem~\ref{thm:unique} would force $2q+1$ to be prime,
giving a full equivalence without Conjecture~\ref{conj:conv}.
\end{remark}

\section{Computational data}

For all Sophie Germain primes $q \leq 50{,}000$ (669 primes), we computed $\pi(q)$ and $z(2q + 1)$
and determined which satisfy $z(2q + 1) \mid \pi(q)$.

\begin{itemize}
\item 160 primes have $S(q) \neq \emptyset$ ($\approx 23.9\%$ of Sophie Germain primes).
\item $S(q)$ always matches the arithmetic progression of Theorem~\ref{thm:AP} (supporting Conjecture~\ref{conj:conv}).
\item $\left(\frac{5}{2q+1}\right) = -1$ for all $q > 5$ (Theorem~\ref{thm:legp}).
\item $\pi(q)/z(2q+1)$ values: $\{1, 2, 3, 7, 9, 21, 33, 43\}$. All odd except for $q = 5$ (Theorem~\ref{thm:parity}).
\item No non-Sophie-Germain $q \leq 50{,}000$ has $S(q) \neq \emptyset$ (supporting Conjecture~\ref{conj:conv}).
\item Every $q > 5$ with $S(q) \neq \emptyset$ satisfies $q \equiv 8 \pmod{15}$ (Theorem~\ref{thm:cong}).
\end{itemize}

Table~1 lists all 33 primes $q \leq 5000$ with $S(q) \neq \emptyset$.

\medskip
\noindent\textbf{Example 8.1.} For $q = 1583$: $p = 3167$, $\pi(1583) = 3168$, $z(3167) = 96$, $\pi(q)/z(p) = 33$.

\medskip
\noindent\textbf{Example 8.2.} For $q = 53$: $p = 107$, $\pi(53) = 108$, $z(107) = 36$, $\pi(q)/z(p) = 3$. The
arithmetic progression is $\{0, 36, 72\} \pmod{108}$.

\section{Generalization to Lucas sequences}

The results extend to Lucas sequences $U_n = U_n(P, Q)$ defined by $U_0 = 0$, $U_1 = 1$, $U_n =
PU_{n-1} - QU_{n-2}$, with discriminant $D = P^2 - 4Q \neq 0$. The Fibonacci numbers are $U_n(1, -1)$
with $D = 5$. Let $\alpha(p)$ denote the rank of apparition of $p$ in $\{U_n\}$ and $\omega(q)$ the period of
$\{U_n \bmod q\}$.

The analogues of Lemma~\ref{lem:basic} hold with $D$ replacing 5 (see \cite{Ballot, Koshy}), and all proofs carry
over:

\begin{theorem}\label{thm:lucas}
Let $D = P^2 - 4Q$ be a non-square integer and $\gcd(q, 2QD) = 1$. Define
$S_U(q) = \{r \bmod \omega(q) : q \mid \varphi(U_m)\ \forall\, m \equiv r \pmod{\omega(q)}\}$.
\begin{enumerate}[label=(\alph*)]
\item If $q$ is a Sophie Germain prime and $\alpha(2q+1) \mid \omega(q)$, then $S_U(q) \neq \emptyset$.
\item If a prime $p \equiv 1 \pmod{q}$ satisfies $\alpha(p) \mid \omega(q)$, then $p = 2q+1$.
\item Assuming the analogue of Conjecture~\ref{conj:conv}, $S_U(q) \neq \emptyset$ if and only if $q$ is
Sophie Germain and $\alpha(2q + 1) \mid \omega(q)$.
\end{enumerate}
\end{theorem}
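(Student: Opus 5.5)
The plan is to transport each ingredient of the Fibonacci argument to $U_n(P,Q)$, replacing $5$ by $D$. The basic tool is the Lucas-sequence analogue of Lemma~\ref{lem:basic}, cited from \cite{Ballot, Koshy}:
\begin{enumerate}[label=(\roman*)]
\item $p \mid U_m$ iff $\alpha(p) \mid m$, for $p \nmid 2QD$;
\item $\alpha(q) \mid \omega(q)$;
\item $\alpha(p) \mid p - \left(\frac{D}{p}\right)$;
\item $\omega(q) \mid q^2-1$ when $\left(\frac{D}{q}\right) = \pm 1$.
\end{enumerate}
Item (iv) is where $\gcd(q,2QD)=1$ enters. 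The matrix $\left(\begin{smallmatrix} P & -Q \\ 1 & 0 \end{smallmatrix}\right)$ is invertible mod $q$ since $q \nmid Q$. It has distinct eigenvalues since $q \nmid D$. Its eigenvalues lie in $\mathbb{F}_q^\times$ or in the norm-$Q$ part of $\mathbb{F}_{q^2}^\times$, so their orders divide $q-1$ or $(q+1)\,\mathrm{ord}(Q)$. Hence the period divides $q^2-1$. This is where the Fibonacci exception $q=5$ is excluded automatically, since $5 \mid D$ there.

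With these, part (a) is verbatim Theorems~\ref{thm:char} and~\ref{thm:suff}. For $p = 2q+1$ and any $m \equiv 0 \pmod{\omega(q)}$, we have $\alpha(p) \mid m$, so $p \mid U_m$. Lemma~\ref{lem:totdiv}, which uses only multiplicativity of $\varphi$, then gives $q \mid \varphi(U_m)$. Hence $0 \in S_U(q)$. Part (c) is formal once (a) and (b) hold. The ``if'' direction is (a). For ``only if'', the analogue of Conjecture~\ref{conj:conv} produces a prime $p \equiv 1 \pmod q$ with $\alpha(p) \mid \omega(q)$. Part (b) then forces $p = 2q+1$, so $q$ is Sophie Germain and $\alpha(2q+1) \mid \omega(q)$.

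The substance is part (b), which I would attack along the lines of Theorem~\ref{thm:unique}. Write $p = kq+1$ with $k$ even and assume $k \ge 4$. From $\alpha(p) \mid \omega(q) \mid q^2-1$ the two Legendre cases follow.
\begin{itemize}
\item If $\left(\frac{D}{p}\right) = 1$: $\alpha(p) \mid \gcd(kq, q^2-1) \mid k$.
\item If $\left(\frac{D}{p}\right) = -1$: $\alpha(p) \mid \gcd(kq+2, q^2-1) \mid k^2-4$, by the same gcd computation as before.
\end{itemize}
In either case $p \mid U_d$ for some $d \mid k(k^2-4)$, so $p \le |U_d|$. For fixed $k$ this leaves only finitely many $p$, and for fixed $(P,Q)$ and small $k$ the argument closes by inspection as in Theorem~\ref{thm:unique}(a).

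To make (b) uniform in $k$, I would look for an extra constraint relating $q$ and $\alpha(p)$. The first thing to try is the Frobenius action on the roots modulo $p$. From $q \mid p-1$ and $\alpha(p) \mid \omega(q)$, try to show that $\alpha(p)$ divides both $p \mp 1$ and $q \pm 1$. Then bound $\gcd$'s so that $\alpha(p)$ is at most a constant times $k$ independent of $q$. Combined with the growth $|U_d| \ll \lambda^d$, this would give $kq < \lambda^{Ck}$, i.e.\ $q$ is bounded in terms of $k$.

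This last step is the main obstacle, and I expect it to fail in general. Even for Fibonacci, Case~2 only yields $\alpha(p) \mid k^2-4$, which permits $p$ up to roughly $\lambda^{k^2}$. The paper itself could only verify (b) up to $k \le 31$, and for general $(P,Q)$ small-$k$ coincidences such as the $q=5$, $p=41$ phenomenon may reappear. So, as the proposal stands, (b) and hence (c) would be established only in the range where this finite check succeeds, and the unrestricted claim would remain conditional.
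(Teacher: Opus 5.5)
Your treatment of (a), your derivation of $\omega(q)\mid q^2-1$ from $\gcd(q,2QD)=1$, and your reduction of (c) to (a) and (b) agree with the paper, whose entire argument is that ``all proofs carry over.'' Your two bounds $\alpha(p)\mid k$ and $\alpha(p)\mid k^2-4$ are exactly the two cases of Theorem~\ref{thm:unique}.

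The gap is (b), and it is worse than a missing uniform-in-$k$ estimate. Your fallback step, ``for fixed $(P,Q)$ and small $k$ the argument closes by inspection,'' fails. The inspection in Theorem~\ref{thm:unique}(a) uses the specific values $F_4=3$, $F_6=8,\dots$. For general $(P,Q)$, an early term can itself be a prime of the forbidden form $kq+1$.

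Take $(P,Q)=(13,-1)$, so $D=173$, and $q=3$; then $\gcd(q,2QD)=\gcd(3,346)=1$. Since $U_2=P=13$, we have $\alpha(13)=2$. Since $13\equiv 1\pmod 3$, the sequence $U_n \bmod 3$ is the Fibonacci sequence mod $3$, so $\omega(3)=8$. Thus $13\equiv 1\pmod 3$ and $\alpha(13)\mid\omega(3)$, yet $13\neq 2\cdot 3+1$.

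This is not only a small-$q$ artifact. For $(P,Q)=(21,-1)$ one has $U_3=442=2\cdot 13\cdot 17$ and $U_4=9303=21\cdot 443$, so $\alpha(443)=4$. Meanwhile $U_n \bmod 13$ runs $0,1,8,0,8,12,0,12,5,0,5,1,0,1$, so $\omega(13)=12$. Hence $p=443=34\cdot 13+1$ violates (b) with $q=13>(|P|-1)/2$.

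So (b) is false as stated, and no argument can supply your missing step. The failure also reaches (c). Take $(P,Q)=(29,-1)$ and $q=7$: here $D=845$, $\gcd(7,1690)=1$, $\alpha(29)=2$, and $\omega(7)=\pi(7)=16$. Hence $29\mid U_m$ for every $m\equiv 0\pmod{16}$, so $0\in S_U(7)$, although $7$ is not a Sophie Germain prime. In this instance the conclusion of the analogue of Conjecture~\ref{conj:conv} actually holds, with witness $29$. So assuming that conjecture cannot rescue the ``only if'' direction of (c).

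The paper's one-line justification has the same defect. Theorem~\ref{thm:unique} is established only for $k\le 31$, and only for $F_n$, using Fibonacci-specific factor tables, so there is nothing to carry over. Your closing caveat points the right way but should be sharper. Part (b), and with it the ``only if'' half of (c), is not merely unproved. It needs additional hypotheses on $(P,Q)$ and $q$, together with either a proved range of $k$ or the analogue of OQ2, before it can even be stated correctly.
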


Theorem~\ref{thm:legp} generalizes with $D$ replacing 5: for $q > (|P|-1)/2$ (so that $2q+1 > |P| \geq U_2$, ensuring $\alpha(2q+1) \geq 3$), $\left(\frac{D}{2q+1}\right) = -1$. Theorem~\ref{thm:parity} generalizes when $Q = -1$, since $\alpha\beta = Q = -1$ gives $\alpha^{q+1} = -1$ in $\mathbb{F}_{q^2}$. The analogue of Theorem~\ref{thm:cong} (the congruence constraint) requires combining the modulo 3 constraint with both Legendre symbol conditions:
\[
q \equiv 2 \pmod{3}, \qquad \left(\tfrac{D}{q}\right) = -1, \qquad \left(\tfrac{D}{2q+1}\right) = -1.
\]
The first comes from $2q+1 > 3$ being prime, the second from the analogue of Lemma~\ref{lem:legq}, and the third from the generalized Theorem~\ref{thm:legp}. By quadratic reciprocity, the values of $\left(\frac{D}{q}\right)$ and $\left(\frac{D}{2q+1}\right)$ depend on the residue classes of $q$ and $2q+1$ modulo $4|D|$ in general, and modulo $|D|$ when $D \equiv 1 \pmod{4}$ (the conductor of the Kronecker symbol $\left(\frac{D}{\cdot}\right)$). Combined with $q \equiv 2 \pmod{3}$, this restricts $q$ to a finite union of arithmetic progressions with modulus dividing $\mathrm{lcm}(3, 4|D|)$. Concretely:
\begin{itemize}
\item For $D = 5$ (so $D \equiv 1 \pmod 4$): the modulus is $\mathrm{lcm}(3, 5) = 15$, recovering $q \equiv 8 \pmod{15}$ (Theorem~\ref{thm:cong}).
\item For $D = 8$ (Pell numbers; $2$ ramifies): the modulus is $\mathrm{lcm}(3, 8) = 24$, yielding the single class $q \equiv 5 \pmod{24}$.
\item For $D = 13$ ($D \equiv 1 \pmod 4$): the modulus is $\mathrm{lcm}(3, 13) = 39$, yielding the three classes $q \bmod 39 \in \{2, 5, 20\}$.
\end{itemize}

\begin{remark}
Computational verification for the Pell numbers $U_n(2, -1)$ ($D = 8$): 15/82 Sophie
Germain primes $q \leq 3000$ satisfy $S_U(q) \neq \emptyset$, all with $|S_U(q)|$ odd, and all satisfy $q \equiv 5 \pmod{24}$. For $U_n(3, -1)$ ($D = 13$):
16/82, all satisfying $q \bmod 39 \in \{2, 5, 20\}$ (extended to 39 hits for $q \leq 10{,}000$, with the same three residue classes and no occurrence of the two CRT-allowed classes $\{8, 11\} \bmod 39$ that fail $\left(\frac{13}{2q+1}\right) = -1$). Two sequences with the same $D$ yield $S_U(q) \neq \emptyset$ for the same primes: verified for
$U_n(1, -1)$ and $U_n(3, 1)$ (both $D = 5$).
\end{remark}

\section{Open problems}\label{sec:open}

\begin{description}
\item[OQ1.] \textbf{The converse conjecture.} Prove Conjecture~\ref{conj:conv}: that $S(q) \neq \emptyset$ forces the
existence of a prime $p \equiv 1 \pmod{q}$ with $z(p) \mid \pi(q)$. This would complete the
equivalence between $S(q) \neq \emptyset$ and the Sophie Germain condition.

\item[OQ2.] \textbf{Uniform proof of Theorem~\ref{thm:unique}.} Give a purely algebraic proof,
valid for all $k \geq 4$ and all odd primes $q \neq 5$, that no prime $p = kq + 1$ with $z(p) \mid \pi(q)$ exists.
The principal obstacle is Case~2 ($\left(\frac{5}{p}\right) = -1$), where $z(p) \mid k^2 - 4$
yields candidates up to $F_{k^2-4}$.

\item[OQ3.] \textbf{Density formula.} Express the density $\approx 23.9\%$ as a Bateman--Horn--Chebotarev
Euler product over the Kummer extensions $\mathbb{Q}(\sqrt{5}, \alpha^{1/\ell})$.

\item[OQ4.] \textbf{Values of $\pi(q)/z(2q+1)$.} Is $\{\pi(q)/z(2q+1) : q > 5 \text{ with } z(2q+1) \mid \pi(q)\} = \{\text{odd integers}\}$?
\end{description}

\newpage
\begin{table}[h]
\centering
\caption{Sophie Germain primes $q \leq 5000$ with $S(q) \neq \emptyset$.}
\label{tab:data}
\medskip
\begin{tabular}{rrrrc r}
\hline
$q$ & $p = 2q+1$ & $\pi(q)$ & $z(p)$ & $\pi(q)/z(p)$ & $\left(\frac{5}{p}\right)$ \\
\hline
3 & 7 & 8 & 8 & 1 & $-1$ \\
5 & 11 & 20 & 10 & 2 & $+1$ \\
23 & 47 & 48 & 16 & 3 & $-1$ \\
53 & 107 & 108 & 36 & 3 & $-1$ \\
83 & 167 & 168 & 168 & 1 & $-1$ \\
173 & 347 & 348 & 116 & 3 & $-1$ \\
293 & 587 & 588 & 588 & 1 & $-1$ \\
443 & 887 & 888 & 888 & 1 & $-1$ \\
593 & 1187 & 1188 & 1188 & 1 & $-1$ \\
653 & 1307 & 1308 & 436 & 3 & $-1$ \\
683 & 1367 & 1368 & 1368 & 1 & $-1$ \\
1013 & 2027 & 2028 & 676 & 3 & $-1$ \\
1103 & 2207 & 96 & 32 & 3 & $-1$ \\
1223 & 2447 & 816 & 816 & 1 & $-1$ \\
1583 & 3167 & 3168 & 96 & 33 & $-1$ \\
1733 & 3467 & 1156 & 1156 & 1 & $-1$ \\
1973 & 3947 & 1316 & 1316 & 1 & $-1$ \\
2003 & 4007 & 4008 & 4008 & 1 & $-1$ \\
2063 & 4127 & 4128 & 4128 & 1 & $-1$ \\
2273 & 4547 & 4548 & 1516 & 3 & $-1$ \\
2393 & 4787 & 4788 & 4788 & 1 & $-1$ \\
2543 & 5087 & 5088 & 5088 & 1 & $-1$ \\
2693 & 5387 & 5388 & 5388 & 1 & $-1$ \\
2903 & 5807 & 5808 & 1936 & 3 & $-1$ \\
2963 & 5927 & 5928 & 5928 & 1 & $-1$ \\
3413 & 6827 & 6828 & 6828 & 1 & $-1$ \\
3593 & 7187 & 7188 & 2396 & 3 & $-1$ \\
3623 & 7247 & 2416 & 2416 & 1 & $-1$ \\
3803 & 7607 & 7608 & 7608 & 1 & $-1$ \\
3863 & 7727 & 7728 & 7728 & 1 & $-1$ \\
4073 & 8147 & 8148 & 8148 & 1 & $-1$ \\
4793 & 9587 & 9588 & 9588 & 1 & $-1$ \\
4943 & 9887 & 9888 & 9888 & 1 & $-1$ \\
\hline
\end{tabular}
\end{table}


\begin{thebibliography}{9}

\bibitem{Ballot}
C.~Ballot, Density of prime divisors of linear recurrences, \emph{Mem.\ Amer.\ Math.\ Soc.}
\textbf{115} (1995), no.~551.

\bibitem{Brillhart}
J.~Brillhart, P.~L.~Montgomery, and R.~D.~Silverman, Tables of Fibonacci and Lucas
factorizations, \emph{Math.\ Comp.}\ \textbf{48} (1988), 753--765; updated at \texttt{mersennus.net/fibonacci}.

\bibitem{Carmichael}
R.~D.~Carmichael, On the numerical factors of the arithmetic forms $\alpha^n \pm \beta^n$, \emph{Ann.\ of
Math.}\ (2) \textbf{15} (1913), 30--70.

\bibitem{DoraisKlyve}
F.~G.~Dorais and D.~Klyve, A Wieferich prime search up to $6.7 \times 10^{15}$,
\emph{J.\ Integer Seq.}\ \textbf{14} (2011), no.~9, Article 11.9.2.

\bibitem{Lucas}
\'E.~Lucas, Th\'eorie des fonctions num\'eriques simplement p\'eriodiques, \emph{Amer.\ J.\ Math.}\ \textbf{1} (1878), 184--240.

\bibitem{SunSun}
Z.-H.~Sun and Z.-W.~Sun, Fibonacci numbers and Fermat's last theorem, \emph{Acta Arith.}\ \textbf{60}
(1992), no.~4, 371--388.

\bibitem{Wall}
D.~D.~Wall, Fibonacci series modulo $m$, \emph{Amer.\ Math.\ Monthly}\ \textbf{67} (1960), 525--532.

\bibitem{Koshy}
T.~Koshy, \emph{Fibonacci and Lucas Numbers with Applications}, Wiley-Interscience, New York,
2001.

\end{thebibliography}
\end{document}